\documentclass[a4paper,10pt]{article}
\usepackage{geometry,times}
\usepackage{amsfonts}
\usepackage{amssymb}
\usepackage{amsmath}
\usepackage{tikz}
\usepackage{pgfplots}
\usepackage{tikz-3dplot}
\usetikzlibrary{3d}
\usetikzlibrary{shapes.geometric}
\tdplotsetmaincoords{60}{115}
\pgfplotsset{compat=newest}
\definecolor{paleyellow}{rgb}{1, 1, 0.9}
\usepackage{amsthm}

\newcommand{\bb}{\mathbb}

\theoremstyle{plain}
\newtheorem{satz}{prop}
\newtheorem{corollary}[satz]{Corollary}

\newtheorem{definition}[satz]{Definition}
\newtheorem{prop}[satz]{Proposition}
\theoremstyle{thm}
\newtheorem{theorem}[satz]{Theorem}
\theoremstyle{note}
\newtheorem{note}[satz]{Note}
\newtheorem{remark}[satz]{Remark}
\theoremstyle{problem}

\theoremstyle{notadef}
\newtheorem{notation}[satz]{Notation}
\usepackage[]{graphicx}
\usepackage{color}
\definecolor{dark-BLGR}{rgb}{0.28,0.28,0.28}

\definecolor{dark-red}{rgb}{0.90,0.14,0.14}

\definecolor{dark-viol}{rgb}{0.73,0.18,0.69}
\definecolor{Cyan}{rgb}{0.31,0.67,0.82}

\newcommand{\red}{\textcolor{red}}

\definecolor{light-yellow}{rgb}{1,1,0.8}

\definecolor{Blue}{rgb}{0.0353,0.0275,0.4}

\definecolor{GRAY}{rgb}{0.26,0.26,0.26}

\definecolor{GREYY}{rgb}{0.45,0.45,0.45}

\definecolor{vivid-viol}{rgb}{0.3255,0.0353,0.55}

\definecolor{dark-blue}{rgb}{0.05,0.05,0.65}

\definecolor{dark-green}{rgb}{0.03,0.77,0.29}

\definecolor{dark-Green}{rgb}{0.03,0.57,0.09}

\definecolor{strong-viol}{rgb}{0.2353,0.094,0.349}

\definecolor{BLUE}{rgb}{0.41,0.44,0.93}

\definecolor{RED}{rgb}{0.90,0.18,0.32}

\definecolor{Yel}{rgb}{0.89,0.95,0.19}

\definecolor{White}{rgb}{1,1,1}
\newcommand{\White}{\textcolor{White}}
\definecolor{black}{rgb}{0,0,0}

\definecolor{Orchid}{rgb}{0.6,0.1607,0.2}

\definecolor{Orange}{rgb}{0.99,0.521,0.34902}

\definecolor{magenta}{rgb}{0.99,0.02,0.99}

\definecolor{Thistle}{rgb}{0.1151,0.1249,0.9873}

\definecolor{brown}{rgb}{0.5450,0.2825,0.07453}

 \usepackage{titlesec}
\titleformat{\section}
{\normalfont
  \bfseries
}
{\thesection}{1ex}{}
\titleformat{\subsection}
{\normalfont
  \bfseries
}
    {\thesubsection}{1ex}{}

\newcommand{\NN}{\ensuremath{\mathbb{N}}}

\begin{document}
\hspace*{1cm}\\[.4cm]
{\large \textbf{Estimates of the Poincar\'e constants for bounded domains in 
Euclidian space ${\bb R}^{n}$}}
\\[.5cm]
Bernd Rummler\\
\hspace*{2mm}{\small \em Otto-von-Guericke-Universit\"at Magdeburg,
  Inst. f\"ur Analysis und Numerik, PF
  4120, 39016 Magdeburg}
\\[2mm]
Gudrun Th\"ater\footnote{Corresponding author\quad
  E-mail:~\textsf{gudrun.thaeter@kit.edu}}\\
\hspace*{2mm}{\small \em Inst. f\"ur Angewandte \& Numerische Mathematik, KIT, 
  76128 Karlsruhe}
\\[5mm]
\hspace*{.1cm}\hfill\parbox{16cm}{
 {\small\textbf{Key words} Spherical polar coordinates, Gradients,
   Divergence, Laplacian, Stokes operator}\\[2mm] 
 {\small\textbf{MSC (2020)}  35J05, 35J08, 35Q30, 35Q35, 76D07, 76E06, 76M22}
 \\[2mm]
{\small\textbf{Abstract:}
We study bounded, simply connected, convex, and  ${\cal C}^2$-smooth
domains ${\Omega}\,={\Omega}_{(n)}$ in the  
space ${\bb R}^{n}$, $n\,\in\,\{2,..,N\}$ with $N\,<\,\infty$. For such domains 
we use the diameter $\text{diam}\,\Omega$ 
to find balls
with the property that they 
are included in this ball, while the diameter
of the ball is chosen to be minimal as $2 \cdot R\,=\,
\text{diam}\,{\Omega}_{(n)}$. 
The center of this chosen ball is denoted by $\tilde{\underline{x}}$.
With a simple transformation we are able to map ${\Omega}_{(n)}$ onto a
domain $\hat{\Omega}_{(n)}$ into the open unit ball
${\Omega}^{*}_{(n)}:=\{{\underline{x}} \in {\bb R}^{n}:\, 
\|{\underline{x}}\|\,<1\}$ with the center ${\underline{x}_o}\,=\,{\underline{0}}$
and $R_o\,=\,1$.
We derive estimates for the Poincar\'e constants of scalar functions
$v$ and vector functions ${\underline{v}}$ on those  
domains. 
The main ingredient is  
the first eigenvalue of the Laplacian or the Stokes operator  
on ${\Omega}$ with vanishing Dirichlet traces on  $\partial{\Omega}$,
respectively 
(we use results of \cite{RuRuTh2026}). 
Additionally we apply the relation of the first eigenvalues
and one eigenfunction of the (scalar) Laplace and the Stokes operator cf. \cite{RuRuTh2026}. 
Our estimates for the Poincar\'e constants
are new. 
The dimension $n$ of the space 
has significant effect on the quality of our estimates. This is
illustrated with the help of an example.
}}
\newcommand{\abs}[1]{\left | #1 \right |}
\newcommand{\RR}{\mathbb{R}}
\newcommand{\cC}{{\mathcal{C}}}
\section{Introduction\label{sec_int}}
In  \cite{RuRuTh2026} we calculated the first simple
eigenvalues $\lambda_{1,L}$ of the Laplacian ${\boldsymbol L}={\boldsymbol L}_{o}$ on the open unit ball ${\Omega}^{*}_{(n)}$ 
in ${\bb R}^{n}$, $n\,\in\,\{2,..,N\}$ with $N\,<\,\infty$ 
and the 
of the first eigenvalues $\lambda_{1,S }$ of the Stokes operator ${\boldsymbol S}={\boldsymbol S}_{o}$ on ${\Omega}^{*}_{(n)}$
with vanishing Dirichlet traces on $\partial{\Omega}^{*}_{(n)}$ as the squares of the smallest positive solution (the smallest positive root) 
$\kappa_{1,L}$ resp. as the squares of the smallest positive solution (the smallest positive root) $\kappa_{1,S}$ 
of the equations
$J_{\frac{n}{2}-1}(\kappa_{L})\,=\,0$\,  resp. $J_{\frac{n}{2}}(\kappa_{S})\,=\,0$, for all $n\,>\,1$ (see the Note 12
and Corollary 20 in \cite{RuRuTh2026}). There one easily reads off the Poincar\'e constants for ${\Omega}^{*}_{(n)}$
as $c_{p}\,=\,{\kappa_{1,L}}^{-1}$
resp. $c_{p,S}\,=\,{\kappa_{1,S}}^{-1}$. By the shift-Theorem
(i.e. Theorem 22 in \cite{RuRuTh2026}) we have the connection of the
Poincar\'e constants for scalar functions (i.e. with the Laplacian
${\boldsymbol L}={\boldsymbol L}_{o}$ on ${\Omega}^{*}_{(n+2)}$) and
the Poincar\'e constants for vector fields (i.e. the Stokes operator 
${\boldsymbol S}={\boldsymbol S}_{o}$ on ${\Omega}^{*}_{(n)}$) as
$c_{p,(n+2)}\,=\,c_{p,S,(n)}$ for any $n\, >\,1$.\\ 
\noindent Our {\bf main} results {{ use 
the operating system \cite{OS}. We provide estimates 
adapted to any bounded domain $\Omega_{(n)}\,\subset {\bb R}^{n}$ ($n\,\in\,\{2,..,N\}$)
for scalar fields 
as well as for solenoidal vector fields by the use of the exact bounds for the Poincar\'e constants on balls (see \cite{RuRuTh2026}). There
we always assume $\Omega_{(n)}$ to be simply connected and convex  of the class  ${\cal{C}}^{2}$ resp. ${\cal{C}}^{0,1}$.}}\\[.1cm]
\begin{figure}[ht]
\hspace*{2cm}
\begin{minipage}[t]
{0.5\textwidth}
\begin{tikzpicture}[scale=1.6,tdplot_main_coords] 
\coordinate (P) at ({0},{2.2225/sqrt(2)},{2.2225/sqrt(2)
});
\shade[ball color = yellow,
    opacity = 0.35
] (0,0,0) circle (1.75cm);
\shade[ball color = blue,
    opacity = 0.4
] (0,0,0) ellipse (1.75cm and 1.4cm);
 
\draw[thick,black, -stealth] (0,0,0) -- (P);
\draw(-.3,1.,1.)  node[anchor=east,black]{$R=\text{diam}\,\Omega_{(n)}$};
\draw(0.25,-0.55,-0.35) node[anchor=east]{\Large{$\Omega_{(n)}$}};
%
\end{tikzpicture}
\end{minipage}
\hspace*{-1cm}
\begin{minipage}[t]{0.5\textwidth}
\begin{tikzpicture}[scale=1.6,tdplot_main_coords] 
\coordinate (P) at ({0},{1.905/sqrt(2)},{1.905/sqrt(2)
});
\shade[ball color = yellow,
    opacity = 0.2
] (0,0,0) circle (1.5cm);
\shade[ball color = red,
    opacity = 0.4
] (0,0,0) ellipse (1.5cm and 1.2cm);
 
 
 
\draw[thick,vivid-viol, -stealth] (0,0,0) -- (P);
\draw(-.10,1.05,1.05)  node[anchor=east,vivid-viol]{$R_o\,=\,1$};
\draw(0.25,-0.35,-0.35) node[anchor=east]{{\large${\hat{\Omega}_{(n)}}$}};
%
\end{tikzpicture}
\end{minipage}
\caption{$2$d Sketch of the domain ${\Omega}_{(n)}$ included in the ball  $\tilde{\underline{x}}\,+\,{\Omega}^{*}_{R,(n)}$ with radius  $R=\text{diam}\,\Omega_{(n)}$
and its image ${\hat{\Omega}}_{(n)}\,\subset {\Omega}_{(n)}^{*} $ 
}
\label{fig:geometry}
\end{figure}
%
%
Our ulterior motive to apply 
Courant's max-min principle for the two domains $\hat{\Omega}_{(n)}$ and ${\Omega}^{*}_{(n)}$
in the sense of \cite[Satz 3 on page 355 in Bd. 1]{CouHil}). There we use simple (backward and forward) transformations to map 
${\Omega}\,={\Omega}_{(n)}$ on $\hat{\Omega}_{(n)}\,\subset\, {\Omega}^{*}_{(n)}$. Finally we calculate the estimates for 
the Poincar\'e constants on ${\Omega}_{(n)}$ by the transformation formula.\\
\noindent{\bf General notation A.} Let ${\bb R}^{n}$ be endowed with the
usual Euclidian norm $\| . \|$. 
Elements of ${\bb R}^{n}$ are denoted by underlined small letters.
We write ${\Omega}^*_{(n),R}:=\{{\underline{x}} \in {\bb R}^{n}:\,
\|{\underline{x}}\|\,
<R\}$ for the open balls and especially ${\Omega}^*_{(n)}\,:=\,\{{\underline{x}} \in {\bb R}^{n}:\,
\|{\underline{x}}\|\,
<1=R_o\}$ for the open unit balls. We use $\omega_{(n)}\,:=\,\{{\underline{x}}\in {\bb
  R}^{n}:\,\|{\underline{x}}\|=1\}$ for the closed surfaces of the open unit balls.
For $r\,\in\,(0,\infty)$ the closed spherical surfaces around the origin with radius $r$ are
$\omega_{(n),r}\,:=\,\{{\underline{x}}\in {\bb
  R}^{n}:\,\|{\underline{x}}\|=r\} $
and the surface areas of $\omega_{(n)}$ are 
$|\omega_{(n)}|\,=\,{2\pi}^{\frac{n}{2}}/
{\Gamma(\frac{n}{2})}$ ($\forall  n\,\in\,{\bb N}$), where ${\Gamma(.)}$ is the ${\Gamma}$-function.
The volume of ${\Omega}^*_{(n)}$ is $|{\Omega}^*_{(n)}|\,=\,\frac{|\omega_{(n)}|}{n}$.\\[1mm]
\noindent{\bf Domains and Transformations.}
We use a non-dimensional setting for the open bounded, convex set ${\Omega}_{(n)}$ of class ${\cal C}^{2}$ in ${\bb R}^{n}$ with 
$n\,\in\,{\bb N}$ and the diameter $\text{diam}\,{\Omega}_{(n)}\,=\,2 \cdot R$. The set ${\Omega}_{(n)}$ is included in the ball around a fixed point
$\tilde{\underline{x}}\,\in {\bb R}^{n}$:\\[.15cm]
\hspace*{1cm}
$\tilde{\Omega}^*_{\,\tilde{\underline{x}},R,(n)}\,:=\,
\tilde{\underline{x}}\,+\,{\Omega}^{*}_{R,(n)}\,:=\{{\underline{x}} \in {\bb R}^{n}:\,
\|{\underline{x}}-\tilde{\underline{x}}\|\,<\,R\,=\, \frac{1}{2}\text{diam}\,{\Omega}_{(n)}\}\,,$ \,\,
with the radius $R\,=\, \frac{1}{2}\text{diam}\,{\Omega}_{(n)}$.\\
We apply the simple transformation $F(.)$ of the closed ball ${\overline{\tilde{\Omega}^{{\!{~}}^{\scriptstyle{*}}}}
}_{\hspace*{-.1cm}\!\tilde{\underline{x}},R,(n)}$ onto the closed unit ball 
${\overline{\White{\hat\Omega}^{{\!{~}}^{\scriptstyle{*}}}\hspace*{-.45cm}
\Omega}}_{(n)}$ defined as:
\begin{align} \label{transf} 
\underline{F}:\,{\overline{\tilde{\Omega}^{{\!{~}}^{\scriptstyle{*}}}}
}_{\hspace*{-.1cm}\!\tilde{\underline{x}},R,(n)}
\,\rightarrow \,{\overline{\White{\hat\Omega}^{{\!{~}}^{\scriptstyle{*}}}\hspace*{-.45cm}
\Omega}}_{(n)}\,\quad :\,\,\,\, \underline{F}(\underline{x})\,:=\,\frac{1}{R}({\underline{x}}-\tilde{\underline{x}})\,\quad\,
\forall \,{\underline{x}}\,\in\,{\overline{\tilde{\Omega}^{{\!{~}}^{\scriptstyle{*}}}}
}_{\hspace*{-.1cm}\!\tilde{\underline{x}},R,(n)}\,.
\end{align}
\noindent{\bf General notation B.} Let ${\breve{\Omega}}$ stand as shorthand for any of the domains 
$\Omega_{(n)}$, $\tilde{\Omega}^*_{\,\tilde{\underline{x}},R,(n)}$, 
${\hat{\Omega}}_{(n)}$ and 
${\Omega}^*_{(n)}$
introduced above
and the abbreviation $(.)$ for $({\breve{\Omega}})$, respectively. 
We consider the usual Lebesgue and Sobolev spaces ${\bb L}_{2}(.)$ and
${\bb W}_{2}^{k}(.)$ 
of scalar functions and 
${\underline{\bb L}}_{{\;\!}{2}}(.)=({\bb L}_{2}(.))^{n}$
and ${\underline{\bb W}}_{{\;\!}{2}}^{k}(.)=({\bb
  W}_{2}^{k}(.))^{n}$ of vector functions. 
The norm in ${\bb L}_{2}(.)$ is denoted by $\| . \|_{2}$,
${\bb W}_{2}^{1}\hspace{-.62cm}{~}^{{~}^{{~}^{o}}}\hspace{.2cm}(.)$ is
the closure of $C_{o}^{\infty}(.)$  
in ${\bb W}_{2}^{1}(.)$.  
All solenoidal vector functions belonging to 
${\underline{C}}_{{\;\!}{o}}^{\infty}(.)$ form $\underline{\cal V}(.)$. The closures
of $\underline{\cal V}(.)$ in ${\underline{\bb L}}_{{\;\!}{2}}(.)$ 
and
${\underline{\bb W}}_{{\;\!}{2}}^{1}(.)$, respectively, are denoted by
${\underline{\bb H}}(.)$ 
and ${\underline{\bb V}}(.)$, respectively.
We use the spherical
Bessel functions ${J_{k}(.)}$ of the first kind 
of order  $k\,\in\, \{\frac{1}{2}+m,\,m\,\in\, {\bb N}_{o}\} $ for all the open balls (e.g., \cite{AAR}, \cite{CouHil}, \cite{Triebel}
etc.).
\begin{notation}\label{Nabla} 
We call the vectorial differential expression
\begin{equation} \label{nab_def}
{\underline{\nabla}}\,:=\,(
\frac{\displaystyle{\partial}}{\displaystyle{\partial x_{1}}},
\frac{\displaystyle{\partial}}{\displaystyle{\partial x_{2}}},\dots,
\frac{\displaystyle{\partial}}{\displaystyle{\partial x_{n}}})^{T}
\end{equation}
the Nabla operator (here ${\underline{\nabla}}$ is written in canonical 
(Cartesian) coordinates).
\end{notation}
\begin{notation}
\label{N2} Twofold  
underlined capital letters 
represent
(n,n)-matrices. The so-called Frobenius inner product is: 
\begin{equation} \label{17..def:}
\hspace*{3cm}{\underline{\underline{A}}}:{\underline{\underline{B}}}:=
tr({\underline{\underline{A}}}\cdot{\underline{\underline{B}}}^{T})\,=\,
\sum_{j=1}^{n}\sum_{k=1}^{n}a_{j,k}b_{j,k},
\end{equation}
where $tr(.)$ denotes the trace of the (product-)matrix.
\end{notation}
\noindent
The {\bf Poincar\'e-(Friedrichs-)inequalities} ensure, that  the spaces ${\bb
  W}_{2}^{1}\hspace{-.62cm}{~}^{{~}^{{~}^{o}}}\hspace{.2cm}(.)$ and  
${\underline{\bb V}}(.)$ 
can be equipped with equivalent
norms generated by the Dirichlet norms 
\begin{align} \label{Dirichlet} 
\|u\|_{D}  :=
\Big(\sum_{k=1}^{n}\Big\|{\frac{\displaystyle{\partial
  u}}{\displaystyle{\partial x_{k}}}}\Big\|_{2}^{2} 
\Big)^{1/2}\quad \forall \, u\,\in\,{\bb
  W}_{2}^{1}\hspace{-.62cm}{~}^{{~}^{{~}^{o}}}\hspace{.2cm}(.)\,,\quad 
\|{\underline{u}}\|_{D,S}   := 
\Big(\sum_{j,k=1}^{n}
\Big\|
{\frac{\displaystyle{\partial u_{j}}}{\displaystyle{\partial x_{k}}}}
\Big\|_{2}^{2}
\Big)^{1/2}
\quad \forall \,  {\underline{u}}  \,\in\,{\underline{\bb V}}(.)\,,
\end{align}
where the Frobenius inner product is part of the last
definition. There we use ${\underline{\underline{A}}}\,=\,{\underline{\underline{B}}}\,:={\underline{\nabla}}\cdot
{\underline{u}}^T$.\\
\noindent
Denoting by $c_{p}({.})$ and $c_{p,S}({.})$ the Poincar\'e
constants with respect to the space ${\bb
  W}_{2}^{1}\hspace{-.62cm}{~}^{{~}^{{~}^{o}}}\hspace{.2cm}({\breve{\Omega}})$ 
and to the space ${\underline{\bb V}}({\breve{\Omega}})$,
respectively, the Poincar\'e-
(Friedrichs-)inequalities are 
\begin{align}\label{Poinc}
\|u\|_{2}  \leq   c_{p}({.})   \|u\|_{D} \quad\forall \, u\,
\in\,{\bb W}_{2}^{1}\hspace{-.62cm}{~}^{{~}^{{~}^{o}}}
\hspace{.2cm}({\breve{\Omega}})\,\quad
\mbox{and}\quad
\|{\underline{u}}\|_{{\underline{\bb L}}_{{\;\!}{2}}}
\leq c_{p,S}({.})
\|{\underline{u}}\|_{D,S} \quad
\forall \,  {\underline{u}}  \,\in\,
{\underline{\bb V}}({\breve{\Omega}})\,.
\end{align}
The Poincar\'e constants 
are related to the 
first eigenvalue of the Laplace or Stokes operator on the domains
${\breve{\Omega}}$
(with vanishing Dirichlet traces), respectively,
because of the relations (see the Ths. in Subsections 4.5.3 and 4.5.4 and Th. 3 in
6.1.5 in \cite{Triebel})
\begin{equation} \label{simple-tool}
c_{p}({.}) = 
\big(\lambda_{1,L}(.)\big)^{-1/2}
\quad
\,\quad \mbox{and}\quad
c_{p,S}({.}) =
\big(\lambda_{1,S}({.}) \big)^{-1/2} \,,
\end{equation}
where ${\lambda}_{1,L}({.})$ and  ${\lambda}_{1,S}(.)$
\begin{notation}\label{N3} In ${\bb
  R}^{n},\,n\geq 3$, let ${\underline{\mathfrak{e}}}_{j}\,:=\,
(\delta_{j,1},\delta_{j,2}, \dots, \delta_{j,n})^{T}$  ($j=1,2,\dots,n$, with
Kronecker's delta) be the unit vectors in Cartesian coordinates.
The polar coordinates are  $r$,  $\vartheta_{1}$, $\dots$ , $\vartheta_{n-2}$ and $\varphi$ with the corresponding unit vectors
${\underline{\mathfrak{e}}}_{{\;\!}r}$, ${\underline{\mathfrak{e}}}_{{\;\!}\vartheta_{1}}$, $\dots$, ${\underline{\mathfrak{e}}}_{{\;\!}\vartheta_{n-2}}$  and
${\underline{\mathfrak{e}}}_{{\;\!}\varphi}$.
If we denote by $\{{\underline{\mathfrak{e}}}_{r},
{\underline{\mathfrak{e}}}_{\vartheta_{1}}, \dots ,
{\underline{\mathfrak{e}}}_{\vartheta_{n-2}},{\underline{\mathfrak{e}}}_{\varphi}\}$
the system of these unit vectors in spherical polar coordinates then ${\underline{u}}$
is representable in both systems as\\[.1cm]
\hspace*{1.3cm}${\underline{u}}\,=\,\sum_{j=1}^{n} u_{j}{\underline{\mathfrak{e}}}_{j}\,=\, \sum_{j=1}^{n} u_{j,{\mathfrak{c}}}{\underline{\mathfrak{e}}}_{j}\,=\, 
 u_{r}{\underline{\mathfrak{e}}}_{r}+
 \sum_{k=1}^{n-2}
 u_{\vartheta_{k}}{\underline{\mathfrak{e}}}_{\vartheta_{k}}+
 u_{\varphi}{\underline{\mathfrak{e}}}_{\varphi}\,=\, 
 u_{r,{\mathfrak{s}}}{\underline{\mathfrak{e}}}_{r}+
\sum_{k=1}^{n-2}
 u_{\vartheta_{k},{\mathfrak{s}}}{\underline{\mathfrak{e}}}_{\vartheta_{k}}
+
 u_{\varphi,{\mathfrak{s}}}{\underline{\mathfrak{e}}}_{\varphi}$.\\[.1cm] 
The transformation from one coordinate system to the other is 
${\underline{u}}_{\mathfrak{c}}\,=\,
{\underline{\underline{T}}}_{{\mathfrak{c}},{\mathfrak{s}}}{\underline{u}}_{\mathfrak{s}}$
or ${\underline{u}}_{\mathfrak{s}}\,=\,
{\underline{\underline{T}}}^{-1}_{{\mathfrak{c}},{\mathfrak{s}}}
{\underline{u}}_{\mathfrak{c}}\,=\,
{\underline{\underline{T}}}_{{\mathfrak{s}},{\mathfrak{c}}}{\underline{u}}_{\mathfrak{c}}$, respectively
(these use columns of coordinates).
The transformation matrices ${\underline{\underline{T}}}_{{\mathfrak{c}},{\mathfrak{s}}}$ and ${\underline{\underline{T}}}_{{\mathfrak{s}},{\mathfrak{c}}}$ are given in the Appendix. 
\end{notation}
\noindent Our paper is organized as follows:
We collect the theoretical background in Section \ref{Sec2} at first. There we outline
the procedures to construct the Laplacian as well as the Stokes operator
as Friedrichs' extension from the Poisson and the Stokes
problem, respectively. We introduce the {\em Leray-Helmholtz projector} 
$\Upsilon : {\underline{\bb
    L}}_{{\;\!}2}(.)\,\longmapsto \,{\underline{\bb H}}(.)$ and we benefit from the properties of operators with a pure real
point spectrum as well as the criteria for the smallest eigenvalues
${\lambda}_{1,L}(.)$, ${\lambda}_{1,S}(.)$, 
and their corresponding eigenfunctions for the unit ball ${\Omega}_{(n)}^{*}$.
{{The results of our studies (cf. \cite{RuRuTh2016} -- \cite{RuRuTh2026}) with the special focus on the unit ball ${\Omega}_{(n)}^{*}$ are summarized in Subsection \ref{sec_ball_theo}. 
We refer to the appendix for the spherical surface harmonics of degree $\ell = 1$ and $\ell = 0$, the Laplacian in spherical coordinates as well as properties of the Laplace-Beltrami operator ${\boldsymbol B}$.
Our main results are formulated in Section \ref{Est_Poi_sec}.
There we derive the estimates of the Poincar\'e constants for $\Omega_{(n)}$.
The corresponding upper bounds 
are shown in Subsection \ref{Poin_up_subsec},
where we use the tools of Subsection \ref{sec_ball_theo} and the transformation $F(.)$ (cf. \eqref{transf}).
Finally, in Subsection \ref{sec_eval_results} we provide lower and
upper bounds of the Poincar\'e constants for $\Omega_{(n)}$. 
There we compare lower and upper bounds of the Poincar\'e constants
for $n-$dimensional cubes with the exact values 
for the evaluation of our results.
Additionally we list significant parameters (like volumes etc.) of the
balls and cubes in Subsection \ref{sec_eval_results}. }} 
\section{Theoretical groundwork}\label{Sec2}
\subsection{Laplace and Stokes operators on smooth bounded domains} \label{sec_theo} 
\begin{definition}\label{D4} {\em In Cartesian coordinates the Laplace operator is defined as
\begin{align*}
{\boldsymbol L^{\circledast}}\,{v} := -\Big(
{\displaystyle{\frac{{\partial}^{2} v}{\partial x_{1}^{2}}}}+{\displaystyle{\frac{{\partial}^{2} v}{\partial x_{2}^{2}}}}+\dots\,
{\displaystyle{\frac{{\partial}^{2} v}{\partial x_{n}^{2}}}}\Big)
=- \Delta{\;\!}_{\underline{x} } v
\hspace*{0.7cm}  \forall\,v\,\in\,D({\boldsymbol
  L^{\circledast}})=C_{o}^{\infty}({\breve{\Omega}})\,. 
\end{align*}
We denote Friedrichs' extension of ${\boldsymbol L^{\circledast}}$
by ${\boldsymbol 
  L}:={\overline{\boldsymbol L^{\circledast}}}$, where ${\boldsymbol
  L}$ is defined on 
$D({\boldsymbol L})\,:=\,{\bb
  W}_{2}^{1}\hspace{-.62cm}{~}^{{~}^{{~}^{o}}}\hspace{.2cm}({\breve{\Omega}}) 
\cup {\bb W}_{2}^{2}({\breve{\Omega}})$.}
\end{definition}
\begin{remark} The range of ${\boldsymbol L}$ is
$R({\boldsymbol L})={\bb L}_{2}({\breve{\Omega}})$. In this sense we may write:
${\boldsymbol L}=-\Delta{\;\!}_{\underline{x} } : D({\boldsymbol
  L})\,\longmapsto \,{\bb L}_{2}(.)$.
\end{remark}
\noindent We need the Leray-Helmholtz projection $\Upsilon$ to define the Stokes operator. 
$\Upsilon$ is the well-defined
projector of ${\underline{\bb L}}_{{\;\!}2}(.)$ onto its subspace 
${\underline{\bb H}}(.)$
of generalised solenoidal fields with vanishing generalised traces in
the normal direction. 
We note, that 
it is also used in the sense of:
$\Upsilon :{\underline{\bb W}}_{{\;\!}2}^{1}(.)\,\longmapsto
\,{\underline{\bb V}}(.)$\,.
\begin{definition}\label{D5} 
{\em The Stokes operator is defined as
$
{\boldsymbol S^{\circledast}}\,{\underline{v}}:=-
\Delta_{\underline{x} } {\underline{v}}\hspace*{0.5cm}
\forall\,{\underline{v}}\in D({\boldsymbol
  S^{\circledast}})=\underline{\cal V}({\breve{\Omega}})\, 
$, where ${\underline{v}}\,=\,{\underline{v}}_{{\;\!}\mathfrak{c}}$ is written in Cartesian coordinates
and the vector Laplace operator $\Delta_{\underline{x} }$ acts as a
scalar on each component. 
We denote Friedrichs' extension of ${\boldsymbol S^{\circledast}}$ by 
${\boldsymbol S}:={\overline{\boldsymbol S^{\circledast}}}$, where
${\boldsymbol S}$ is defined on its domain 
$D({\boldsymbol S}):={\underline{\bb S}}_{{\;\!}}^{2}(.)=
{\underline{\bb W}}_{{\;\!}2}^{2}(.)\cap{\underline{\bb V}}(.)$ \,.} 
\end{definition}
\begin{remark} The range of ${\boldsymbol S}$ is $R({\boldsymbol
  S})={\underline{\bb H}}(.) $. In this context one may write
${\boldsymbol S} =-\Upsilon \Delta{\;\!}_{\underline{x}
}:{\underline{\bb S}}_{{\;\!}}^{2}(.) 
\,\longmapsto \,{\underline{\bb H}}(.)$. One
can also use the Laplace operator in spherical polar
coordinates $\Delta_{r,\vartheta_{1},\dots\,\vartheta_{n-2},\varphi}$
(cf. Remark \ref{Lapldimn}). 
There one has to avoid to
choose spherical polar
coordinates for both, $\Delta$
and  ${\underline{v}}\,=\,{\underline{v}}_{{\;\!}\mathfrak{s}}$, since
the vector Laplacian when  
applied to ${\underline{v}}_{{\;\!}\mathfrak{s}}\,$ produces
convoluted tensor fields in this combination 
(see \cite{MoonSpencer}).
\end{remark}
\noindent We sketch the fundamental properties of both operators (i.e. ${\boldsymbol L}$
as well as ${\boldsymbol S}$) using ${\boldsymbol S}$ as an example.
\begin{theorem} \label{thmstok}
The Stokes operator ${\boldsymbol S}$ is positive and self-adjoint.
Its inverse ${\boldsymbol S}^{-1}$ is injective, self-adjoint and compact.
\end{theorem}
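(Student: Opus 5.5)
The plan is to obtain ${\boldsymbol S}$ as the operator associated with a closed, symmetric, coercive sesquilinear form. Self-adjointness and positivity then come from the standard form-representation theorem (equivalently, Friedrichs' construction). Compactness of ${\boldsymbol S}^{-1}$ comes from the compact embedding ${\underline{\bb V}}(.)\hookrightarrow{\underline{\bb H}}(.)$ on the bounded domain ${\breve{\Omega}}$.

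\textbf{Step 1 (the form).} On ${\underline{\bb V}}(.)$ I define
\[
a(\underline{u},\underline{v}):=\int_{\breve{\Omega}}({\underline{\nabla}}\cdot\underline{u}^T):({\underline{\nabla}}\cdot\underline{v}^T)\,d\underline{x}.
\]
For $\underline{u},\underline{v}\in\underline{\cal V}(.)$, integrating by parts gives $({\boldsymbol S^{\circledast}}\underline{u},\underline{v})_{2}=a(\underline{u},\underline{v})=(\underline{u},{\boldsymbol S^{\circledast}}\underline{v})_{2}$. Hence ${\boldsymbol S^{\circledast}}$ is symmetric and nonnegative on the dense subspace $\underline{\cal V}(.)$ of ${\underline{\bb H}}(.)$. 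By the Poincar\'e--Friedrichs inequality \eqref{Poinc} we have $a(\underline{u},\underline{u})=\|\underline{u}\|_{D,S}^2\ge c_{p,S}^{-2}\|\underline{u}\|_2^2$. So the form is bounded below by a positive constant, and its form domain, the closure of $\underline{\cal V}(.)$ in the $\|\cdot\|_{D,S}$-norm, is exactly ${\underline{\bb V}}(.)$. Friedrichs' theorem then gives a self-adjoint extension ${\boldsymbol S}$ with $({\boldsymbol S}\underline{u},\underline{u})_2=a(\underline{u},\underline{u})\ge c_{p,S}^{-2}\|\underline{u}\|_2^2>0$ for $\underline{u}\neq\underline{0}$. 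This proves that ${\boldsymbol S}$ is positive and self-adjoint.

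\textbf{Step 2 (the inverse).} Since ${\boldsymbol S}\ge c_{p,S}^{-2}>0$, zero lies in the resolvent set. Equivalently, by the Lax--Milgram lemma, for each $\underline{f}\in{\underline{\bb H}}(.)$ there is a unique $\underline{u}\in{\underline{\bb V}}(.)$ with $a(\underline{u},\underline{v})=(\underline{f},\underline{v})_2$ for all $\underline{v}\in{\underline{\bb V}}(.)$. Therefore ${\boldsymbol S}^{-1}:{\underline{\bb H}}(.)\to D({\boldsymbol S})$ exists and is bounded with norm at most $c_{p,S}^2$. It is injective, being the inverse of a map. It is self-adjoint, since a bounded everywhere-defined inverse of a self-adjoint operator is self-adjoint; directly, $({\boldsymbol S}^{-1}\underline{f},\underline{g})_2=a({\boldsymbol S}^{-1}\underline{f},{\boldsymbol S}^{-1}\underline{g})=(\underline{f},{\boldsymbol S}^{-1}\underline{g})_2$.

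\textbf{Step 3 (compactness).} Testing with $\underline{v}=\underline{u}$ gives $\|\underline{u}\|_{D,S}^2\le\|\underline{f}\|_2\|\underline{u}\|_2\le c_{p,S}\|\underline{f}\|_2\|\underline{u}\|_{D,S}$. Hence ${\boldsymbol S}^{-1}$ maps ${\underline{\bb H}}(.)$ boundedly into ${\underline{\bb V}}(.)\subset{\underline{\bb W}}_{{\;\!}2}^{1}(.)$ with zero traces. The Rellich--Kondrachov theorem on the bounded domain ${\breve{\Omega}}$ says that the embedding of zero-trace ${\underline{\bb W}}_{{\;\!}2}^{1}$ into ${\underline{\bb L}}_{{\;\!}2}$ is compact; this needs no boundary regularity. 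Composing the bounded map with this compact embedding shows that ${\boldsymbol S}^{-1}$ is compact on ${\underline{\bb H}}(.)$.

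\textbf{Main obstacle.} The expected difficulty is identifying the abstract Friedrichs domain with ${\underline{\bb W}}_{{\;\!}2}^{2}(.)\cap{\underline{\bb V}}(.)$ as stated in Definition \ref{D5}. This requires ${\bb W}_2^2$-regularity for the stationary Stokes problem on ${\cal C}^2$ domains (Cattabriga/Ladyzhenskaya). I would cite that result rather than reprove it. The operator-theoretic claims above do not depend on it.
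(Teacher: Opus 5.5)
Your proposal is correct and follows essentially the same route as the paper, which only defers to Theorems 4.3 and 4.4 of Constantin--Foias. The paper names the Lax--Milgram lemma (for existence, boundedness and self-adjointness of ${\boldsymbol S}^{-1}$ via the symmetric coercive Dirichlet form on ${\underline{\bb V}}(.)$) and Rellich's theorem (for compactness) as the essential tools, exactly as you use them. Your caveat is also consistent with the paper: identifying $D({\boldsymbol S})={\underline{\bb W}}_{{\;\!}2}^{2}(.)\cap{\underline{\bb V}}(.)$ needs a separate Stokes regularity result, and the paper invokes such regularity (Temam, Prop.~I.2.2) only afterwards, for Corollary~\ref{STOeiFU}.
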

\noindent The proof of Theorem \ref{thmstok} is a simple modification of 
Theorems 4.3 and 4.4 in \cite{CoFoi}. The essential tools are
the Rellich theorem and the Lax-Milgram lemma.
The well-known theorem of Hilbert (see, e.g. \cite{CouHil}) and
regularity results like \cite[Prop.~I.2.2]{Temam}
lead to more precise results, namely:
\begin{corollary}
\label{STOeiFU}
The Stokes operator only has a point spectrum.
 All eigenvalues $\lambda_{j}$
of  ${\boldsymbol S}$ are real and of finite multiplicity.
The associated eigenfunctions 
$\{{\underline{w}}_{j}({\underline{x}})\}_{j=1}^{\infty}$
(counted in multiplicity)
are an orthogonal basis of
${\underline{\bb H}}(.)$ and ${\underline{\bb V}}(.)$, i.e.
\begin{align*}
{{(a)}}&\quad {\boldsymbol 
S}{\underline{w}}_{j}:=\lambda_{j}{\underline{w}}_{j}\quad\mbox{for
}\quad{\underline{w}}_{j}\in D({\boldsymbol S})
\quad\forall\,j\in \NN\\
{ (b)}& \quad
0\,<\lambda_{1}\leq\,\lambda_{2}\,\leq\cdots\leq\,\lambda_{j}
\,\leq\cdots\quad\mbox{and}\quad 
\lim_{j\rightarrow\infty}\lambda_{j}=\infty
\\[-2mm]
{(c)}&\quad
\|{\underline{w}}_{j}\|_{{\underline{\bb 
H}}}\,=1\quad\forall \, j\in \NN\,.
\end{align*}
\end{corollary}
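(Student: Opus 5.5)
The plan is to apply the Hilbert–Schmidt spectral theorem for compact self-adjoint operators to ${\boldsymbol S}^{-1}$. This is legitimate because Theorem \ref{thmstok} gives that ${\boldsymbol S}^{-1}:{\underline{\bb H}}(.)\to{\underline{\bb H}}(.)$ is injective, self-adjoint and compact. Hilbert's theorem then yields the following structure.
\begin{itemize}
\item ${\underline{\bb H}}(.)$ has an orthonormal basis of eigenvectors ${\underline{w}}_j$ of ${\boldsymbol S}^{-1}$.
\item The eigenvalues $\mu_j$ are real, have finite multiplicity whenever $\mu_j\neq0$, and accumulate only at $0$.
\item Injectivity excludes the eigenvalue $0$, so every $\mu_j\neq 0$ and the eigenvectors span all of ${\underline{\bb H}}(.)$ rather than just the orthogonal complement of a kernel.
\end{itemize}
Setting $\lambda_j:=\mu_j^{-1}$, the relation ${\boldsymbol S}^{-1}{\underline{w}}_j=\mu_j{\underline{w}}_j$ shows that ${\underline{w}}_j$ lies in the range of ${\boldsymbol S}^{-1}$, which is $D({\boldsymbol S})$. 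Applying ${\boldsymbol S}$ gives (a), and normalisation gives (c).

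For (b), positivity of ${\boldsymbol S}$ from Theorem \ref{thmstok} gives $\lambda_j\|{\underline{w}}_j\|^2=({\boldsymbol S}{\underline{w}}_j,{\underline{w}}_j)>0$, so every $\lambda_j>0$. We may therefore order the eigenvalues increasingly, repeated according to multiplicity. Since $\mu_j\to0$ and each $\mu_j$ has finite multiplicity, we get $\lambda_j\to\infty$. There is a smallest eigenvalue $\lambda_1>0$ because only finitely many $\lambda_j$ lie below any bound. The statement that the spectrum is pure point follows from $\sigma({\boldsymbol S})\setminus\{0\}=\{\mu^{-1}:\mu\in\sigma({\boldsymbol S}^{-1})\setminus\{0\}\}$, together with the fact that $0$ is not in $\sigma({\boldsymbol S})$ since ${\boldsymbol S}^{-1}$ is bounded.

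The remaining claim is that $\{{\underline{w}}_j\}$ is also an orthogonal basis of ${\underline{\bb V}}(.)$. I would use the energy inner product $({\underline{u}},{\underline{v}})_{D,S}=\sum_{j,k}(\partial_k u_j,\partial_k v_j)$. This is an equivalent norm on ${\underline{\bb V}}(.)$ by the Poincar\'e inequality \eqref{Poinc}, and it is the form of the Friedrichs extension.

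First, integration by parts on $D({\boldsymbol S})$ gives $({\underline{w}}_i,{\underline{w}}_j)_{D,S}=({\boldsymbol S}{\underline{w}}_i,{\underline{w}}_j)=\lambda_i\delta_{ij}$. Here the Leray–Helmholtz projector $\Upsilon$ drops out because ${\underline{w}}_j\in{\underline{\bb H}}(.)$. So the family is orthogonal in ${\underline{\bb V}}(.)$.

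Second, for completeness, take ${\underline{v}}\in{\underline{\bb V}}(.)$ with $({\underline{v}},{\underline{w}}_j)_{D,S}=0$ for all $j$. Then $\lambda_j({\underline{v}},{\underline{w}}_j)_{{\underline{\bb H}}}=0$ for all $j$, so ${\underline{v}}=0$ by completeness in ${\underline{\bb H}}(.)$.

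I expect this last step to be the main obstacle. It requires the generalised Green identity $({\underline{v}},{\underline{w}})_{D,S}=({\underline{v}},{\boldsymbol S}{\underline{w}})$ for ${\underline{v}}\in{\underline{\bb V}}(.)$ and ${\underline{w}}\in D({\boldsymbol S})$. I would establish it by density of $\underline{\cal V}(.)$ in ${\underline{\bb V}}(.)$, proving it first for ${\underline{v}}\in\underline{\cal V}(.)$ and passing to the limit. The regularity statement $D({\boldsymbol S})={\underline{\bb W}}_2^2(.)\cap{\underline{\bb V}}(.)$ (cf. \cite[Prop.~I.2.2]{Temam}) is what justifies the integration by parts on the ${\cal C}^2$ domains considered.
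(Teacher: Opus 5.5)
Your proposal is correct and takes the same route as the paper, which only invokes Hilbert's spectral theorem for the compact, injective, self-adjoint ${\boldsymbol S}^{-1}$ from Theorem \ref{thmstok} together with the regularity result \cite[Prop.~I.2.2]{Temam}; you fill in the details the paper leaves implicit. The Green identity you flag as the main obstacle is actually built into the Friedrichs extension: by construction $({\boldsymbol S}{\underline{w}},{\underline{v}})=({\underline{w}},{\underline{v}})_{D,S}$ for all ${\underline{w}}\in D({\boldsymbol S})$ and ${\underline{v}}\in{\underline{\bb V}}(.)$, so the ${\underline{\bb W}}_{2}^{2}$-regularity is not needed for the ${\underline{\bb V}}(.)$-basis step.
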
 
\noindent Concluding we write the eigenvalue problem for the Stokes operator ${\boldsymbol S}$ (cf. Definition \ref{D5}) 
on ${\breve{\Omega}}_{(n)}$ 
in a classical sense.
\begin{remark}\label{StokesClass}
We write the eigenvalue problem
for the Stokes operator ${\boldsymbol S}$ on ${\breve{\Omega}}_{(n)}$
in Cartesian coordinates in the classical sense. Namely we look for ${\underline{v}}\,\in\,{\underline{\bb C}}^2(.)\,$ and $\,  p  \in\,{\bb C}^1(.)$ fulfilling: 
\begin{align}  \label{Sto_Class}
\begin{array}{rcl}
-\,\Delta_{\underline{x}} \,{\underline{v}} \,+\,{\underline{\nabla}} \, p &\,=\,&\lambda {\underline{v}}\\[.2cm]
\mbox{div}{\;\!}\,{\underline{v}}\,=\,{\underline{\nabla}}^T{\underline{v}} &\,=\,& 0 \\[.2cm]
\mbox{with}\,\,{\underline{v}}_{|\partial{{\breve{\Omega}}_{(n)}}}&\,=\,&{\underline{0}}\,.
\end{array}
\end{align}
Under higher regularity assumptions the first two equations ensure that $p$
is a harmonic function, i.e. 
$\Delta_{\underline{x}} \,p\,=\,0\,$.
\end{remark}
\subsection{First respective Eigenvalues and Eigenfunction of Laplace and Stokes operators on n-dimensional balls} \label{sec_ball_theo} 
First let us cite significant results of \cite{RuRuTh2026} (Note 12, Corollary 20 and a part of Theorem 22) for 
the the open unit ball ${\Omega}_{(n)}^{*}$.
\begin{note} \label{Trieb_Laplace_1} \cite[\sc{Note} 12]{RuRuTh2026}\\
Let $n>1$ and ${\Omega}_{(n)}^{*}\subset {\bb R}^{n}$ be the open unit ball. We denote the Laplace 
operator on 
${\Omega}_{(n)}^{*}$ by ${\boldsymbol L}_{o}$.
The first simple eigenvalue
${\lambda}_{1,L}$ of 
${\boldsymbol L}_{o}$ on  ${\Omega}_{(n)}^{*}$ is the square of 
the smallest  positive solution
$\kappa_{1,L}$  of
\begin{align}\label{Lapl_einheitskugel_EW}
J_{\frac{n}{2}-1}(\kappa_{L}) =0\quad \quad \quad \quad \,\forall \,n\,\in\,{\bb N}:\,n>1\,.
\end{align}
The corresponding first eigenfunctions are
\begin{align}\label{Lapl_einheitskugel_EF}
{{w}}_{{\;\!}1,L}({\underline{x}})
\,=\,\frac{\tilde{c}_{1,L}}{r^{\frac{n}{2}-1}}
\cdot
J_{\frac{n}{2}-1}(\kappa_{1,L}\cdot r)\, ,
\end{align}
where,
$\tilde{c}_{1,L}$ denotes a 
scaling constant. 
The functions in \eqref{Lapl_einheitskugel_EF}  only 
depend on $r\,\in\,(0,1)$ and are well-defined in $r=0$ as well. The corresponding spherical harmonic
functions of degree $\ell = 0$ are $S^{\{0\}}=1$, cf. \eqref{SphSurf0}.
\end{note}
\begin{corollary} \label{Stokes_Unit_ball} ({\sc Corollary 20} \cite{RuRuTh2026})\\
Let ${\bb R}^{n}, n>1$, and ${\Omega}_{(n)}^{*}\subset {\bb R}^{n}$ be the open unit ball. We denote the Stokes 
operator on 
${\Omega}_{(n)}^{*}$ by ${\boldsymbol S}_{o}$.
For any $n\,\in\,{\bb N},\,n>1$, the first simple eigenvalue
${\lambda}_{1,S}$ of  
${\boldsymbol S}_{o}$ 
is the square of 
the smallest  positive solution
$\kappa_{1,S}$  of
\begin{align}\label{Stokes_einheitskugel_EW}
J_{\frac{n}{2}}(\kappa_{S}) =0\quad \quad \quad \quad \,\forall \,n>1\,.
\end{align}
For any $n>1$ one of the corresponding first eigenfunctions are the vector fields
\begin{align}\label{Stokes_Allgem_Einsball_EF}
{\underline{w}}_{{\;\!}\varphi,S}({\underline{x}})\,:=\,{\tilde{c}_{1,S}}
\left\{
\begin{array}{ll}
\, \varrho(r) \sin{\vartheta_{1}}\sin{\vartheta_{2}}\cdots \sin{\vartheta_{n-2}}\cdot 
{\underline{\mathfrak{e}}}_{{\;\!}\varphi}\,& \forall \,n\,\in\,{\bb N}\,;\,n>2  \\
{~} & {~} \\ 
\, \varrho(r) \cdot 
{\underline{\mathfrak{e}}}_{{\;\!}\varphi}\,& 
\text{for }\,
\,n=2 \, ,
\end{array}\right.
\end{align}
where $\tilde{c}_{1,S}$ denotes a 
scaling constants. 
The $\varrho(r)$ are scalar functions of \,$r\,\in\,(0,1)$ defined as
\begin{align}\label{Stokes_Allgem_Rho_Einsball}
\varrho(r)
\,:=\,\frac{1}{r^{\frac{n}{2}-1}}
J_{\frac{n}{2}}(\kappa_{1,S} \,r)
&\,\quad\quad\forall \,
n\,\in\,{\bb N},\,n >1. 
\end{align}
Especially one has \quad ${\boldsymbol S}_{o}\,{\underline{w}}_{{\;\!}\varphi,S}\,=\,
{\lambda}_{1,S}\,{\underline{w}}_{{\;\!}\varphi,S}\,=\,(\kappa_{1,S})^2\,{\underline{w}}_{{\;\!}\varphi,S}\,.$
All is well-defined in $r=0$ as well.
\end{corollary}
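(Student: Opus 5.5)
The plan has two parts. First, we check directly that the field ${\underline{w}}_{{\;\!}\varphi,S}$ of \eqref{Stokes_Allgem_Einsball_EF} is an eigenfunction of ${\boldsymbol S}_{o}$ with eigenvalue $(\kappa_{1,S})^2$. Second, we show that no eigenvalue of ${\boldsymbol S}_{o}$ lies below $(\kappa_{1,S})^2$. Both parts use the separation of variables in spherical polar coordinates from Notation \ref{N3} and the appendix, namely the Laplacian in spherical coordinates and the Laplace--Beltrami operator ${\boldsymbol B}$ with its surface harmonics.

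\emph{Eigenfunction check.} We write ${\underline{w}}_{{\;\!}\varphi,S}$ in Cartesian components via ${\underline{\underline{T}}}_{{\mathfrak{c}},{\mathfrak{s}}}$. The factor $\sin\vartheta_1\cdots\sin\vartheta_{n-2}\,{\underline{\mathfrak{e}}}_{\varphi}$ is $r^{-1}(-x_2,x_1,0,\dots,0)^T$, a rotation field. Hence
\begin{align*}
{\underline{w}}_{{\;\!}\varphi,S}=\tilde c_{1,S}\,\frac{\varrho(r)}{r}\,(-x_2,x_1,0,\dots,0)^T .
\end{align*}
Each nonzero component has the form $f(r)\,x_k$, which is a radial function times a degree-one harmonic. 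Divergence-freeness is then immediate, since $\partial_1(-x_2 g)+\partial_2(x_1 g)=0$ for radial $g$. Applying the scalar Laplacian componentwise reduces $-\Delta(f(r)x_k)=\lambda f(r)x_k$ to the radial equation
\begin{align*}
f''+\frac{n+1}{r}f'+\lambda f=0 .
\end{align*}
This is exactly the radial Laplace equation in dimension $n+2$, and its regular solutions are $r^{-n/2}J_{n/2}(\sqrt\lambda\, r)$; this is the origin of the shift $c_{p,(n+2)}=c_{p,S,(n)}$. With $f=\varrho/r$ this matches \eqref{Stokes_Allgem_Rho_Einsball}. The pressure can be taken as $p=0$ in \eqref{Sto_Class}. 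The Dirichlet condition holds precisely when $J_{n/2}(\sqrt\lambda)=0$, and regularity at $r=0$ follows from the small-argument behaviour of $J_{n/2}$.

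\emph{Minimality.} We expand an arbitrary eigenfunction of \eqref{Sto_Class} in vector spherical harmonics of degree $\ell\ge1$ on $\omega_{(n)}$; the degree $\ell=0$ radial part is excluded by $\mathrm{div}=0$ together with the boundary condition. Solenoidal fields split into toroidal (surface-solenoidal) and poloidal parts, and ${\boldsymbol S}_{o}$ preserves each pair $(\ell,\text{type})$, so the eigenvalue problem decouples.

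In the toroidal family of degree $\ell$ the pressure vanishes. The radial equation becomes a Bessel equation of order $\ell+\frac n2-1$, so the eigenvalues are the squares of the zeros of $J_{\ell+\frac n2-1}$. The smallest occurs at $\ell=1$, giving $J_{n/2}$.

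In the poloidal family the pressure is a harmonic $r^\ell S^{\{\ell\}}$. Solving the coupled radial system, a particular solution driven by the pressure plus a homogeneous Bessel part, and imposing both boundary conditions yields a characteristic equation. For $\ell=1$ we expect this to be $J_{\frac n2+1}(\kappa)=0$, and more generally $J_{\ell+\frac n2}(\kappa)=0$.

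Since the positive zeros of $J_\nu$ increase strictly with $\nu$, the overall minimum is the first zero of $J_{n/2}$. Simplicity is meant up to the rotational symmetry, which is why the statement says ``one of the'' eigenfunctions.

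The main obstacle is the poloidal computation. It requires eliminating the pressure and verifying, by Bessel recurrence identities, that the two boundary conditions collapse to $J_{\ell+\frac n2}=0$. I would first handle it by reducing to a scalar stream-type potential, writing ${\underline{v}}=\mathrm{curl}$-like expressions in dimension $n$, for which $\Delta^2$-type factorization $(\Delta+\lambda)\Delta\psi=0$ gives the solution as a combination of $r^{\ell}$ and $r^{1-n/2}J_{\ell+n/2-1}(\kappa r)$.
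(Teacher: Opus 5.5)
The paper contains no proof of this statement. It is quoted from Corollary~20 of the companion paper. All that is visible here of that argument is the appendix toolkit (polar coordinates, ${\underline{\underline{T}}}_{{\mathfrak{c}},{\mathfrak{s}}}$, the degree-one harmonics $S^{\{1\}}_k$, the spectrum $\ell(\ell+n-2)$ of ${\boldsymbol B}$) and the shift Theorem~\ref{thm_shift_Lapl_Stokes_n}. Together these indicate an eigenfunction verification in spherical coordinates whose radial equation is the scalar radial Laplacian in dimension $n+2$; your proposal is correct and reaches the same point by a cheaper route.

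\textbf{Eigenfunction check.} Your Cartesian form ${\underline{w}}_{\varphi,S}=\tilde c_{1,S}\,r^{-n/2}J_{n/2}(\kappa_{1,S}r)\,(-x_2,x_1,0,\dots,0)^T$, together with $\Delta\big(f(r)x_k\big)=x_k\big(f''+\frac{n+1}{r}f'\big)$, bypasses ${\underline{\underline{T}}}_{{\mathfrak{c}},{\mathfrak{s}}}$ and ${\boldsymbol B}$ entirely. It also makes the shift $n\mapsto n+2$ self-evident.

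\textbf{Minimality.} This part is not visible in the paper at all. Your toroidal/poloidal separation is the standard complete route and is sound, given two routine facts: the Cartesian vector Laplacian preserves each vector-spherical-harmonic sector, and gradients of harmonic pressures are poloidal, so $p=0$ in toroidal sectors. The poloidal step you only ``expect'' closes as you claim. Write ${\underline{v}}={\underline{u}}+\kappa^{-2}\nabla p$ with $p=c\,r^{\ell}S$, where $S$ is a surface harmonic of degree $\ell\ge1$. Take ${\underline{u}}=-\kappa^{2}{\underline{x}}\,\phi-\nabla\big(r\partial_r\phi+(n-2)\phi\big)$ with $\phi=g(r)S$ and $g(r)=r^{1-n/2}J_{\ell+n/2-1}(\kappa r)$; this field is solenoidal and satisfies $-\Delta{\underline{u}}=\kappa^2{\underline{u}}$. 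The tangential and radial boundary conditions at $r=1$ (the latter using the radial ODE for $g$) give
\begin{align*}
\frac{c}{\kappa^{2}}\,=\,g'(1)+(n-2)\,g(1)\,=\,(\ell+n-2)\,g(1)\,,
\end{align*}
hence $g'(1)=\ell\,g(1)$. By $xJ_\nu'(x)-\nu J_\nu(x)=-xJ_{\nu+1}(x)$ with $\nu=\ell+\frac n2-1$, this is exactly $J_{\ell+\frac n2}(\kappa)=0$. As sanity checks, for $n=2$ it is the clamped-plate buckling condition $J_{\ell+1}=0$, and for $n=3$ the familiar $j_{\ell+1}=0$. So the bottom of the spectrum lies in the toroidal $\ell=1$ sector, as you conclude.

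\textbf{Simplicity.} Your remark should be sharpened, because the statement overclaims here. The toroidal $\ell=1$ sector is exactly $\{f(r)A{\underline{x}}:\ A^T=-A\}$. Write $j_{\nu,m}$ for the $m$-th positive zero of $J_\nu$. Every other mode, in any sector, has eigenvalue $j_{\nu,m}^2$ with either $\nu>\frac n2$, or $\nu=\frac n2$ and $m\ge2$. All of these lie strictly above $j_{n/2,1}^2$. Hence $\lambda_{1,S}$ has multiplicity exactly $\frac{n(n-1)}{2}$: it is simple only for $n=2$, and for $n\ge3$ the word ``simple'' cannot be proved.

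Your argument establishes what is actually true: $\lambda_{1,S}=\kappa_{1,S}^2$, and ${\underline{w}}_{\varphi,S}$ (the case $A={\underline{\mathfrak{e}}}_{2}{\underline{\mathfrak{e}}}_{1}^{T}-{\underline{\mathfrak{e}}}_{1}{\underline{\mathfrak{e}}}_{2}^{T}$) is one of its eigenfunctions. This is consistent with the statement's own phrase ``one of the corresponding first eigenfunctions''.
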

\noindent In what follows we use the dimension $n$ of the spaces ${\bb R}^{n}$ as an index in the notation
of the roots, of the eigenvalues and of the Poincar\'e constants too.
\begin{theorem} \label{thm_shift_Lapl_Stokes_n} ({\sc Theorem 22} \cite{RuRuTh2026})\\
Let $n >1$  and consider ${\Omega}_{(n)}^{*}\subset {\bb R}^{n}$ resp.  
${\Omega}_{(n+2)}^{*}\subset {\bb R}^{n+2}$.
The first eigenvalues ${\lambda}_{1,S,(n)}(o)\,=\,{\kappa^2_{1,S,(n)}(o)}$ of the Stokes operator 
on ${\Omega}_{(n)}^{*}$ in  ${\bb
  R}^{n}$ coincide with the first eigenvalues 
resp. ${\lambda}_{1,L,(n+2)}(o)\,=\,{\kappa^2_{1,L,(n+2)}(o)}$ of the Laplacian
on on ${\Omega}_{(n+2)}^{*}$ in  ${\bb R}^{n+2}$.  
Due to  \eqref{Stokes_einheitskugel_EW} and 
\eqref{Lapl_einheitskugel_EW} we see that \\[.25cm]
\hspace*{5cm}
$J_{\frac{n}{2}}(\kappa_{1,S,(n)})\,=\,J_{\frac{n+2}{2}-1}(\kappa_{1,L,(n+2)})\,=\,0\,.$\\[.1cm]
In particular,  
for the Poincar\'{e} constants it holds that $c_{p,S,(n)}({\Omega}_{(n)}^{*})\,=\,c_{p,(n+2)}({\Omega}_{(n+2)}^{*})$
$\forall\,n \,\in\,{\bb N} ,\,n >1$.
\end{theorem}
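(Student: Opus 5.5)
The statement is Theorem \ref{thm_shift_Lapl_Stokes_n}. It follows directly from Note \ref{Trieb_Laplace_1} and Corollary \ref{Stokes_Unit_ball}, which are already available. The plan is to compare the two defining Bessel equations index by index and then translate equality of eigenvalues into equality of Poincar\'e constants through \eqref{simple-tool}.

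\textbf{Step 1: compare the Bessel equations.} Fix $n\in{\bb N}$ with $n>1$. By Corollary \ref{Stokes_Unit_ball}, $\kappa_{1,S,(n)}$ is the smallest positive root of $J_{\frac{n}{2}}(\kappa)=0$. Apply Note \ref{Trieb_Laplace_1} in dimension $n+2$; this is admissible because $n+2>1$. It gives $\kappa_{1,L,(n+2)}$ as the smallest positive root of $J_{\frac{n+2}{2}-1}(\kappa)=0$. Since $\frac{n+2}{2}-1=\frac{n}{2}$, the two equations involve the same Bessel function of the first kind. The zeros of $J_{\nu}$ with $\nu\geq 0$ form a discrete set of positive reals, so each equation has a well-defined smallest positive root, and that root is the same for both. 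Hence $\kappa_{1,S,(n)}=\kappa_{1,L,(n+2)}$, which also proves the displayed identity $J_{\frac{n}{2}}(\kappa_{1,S,(n)})=J_{\frac{n+2}{2}-1}(\kappa_{1,L,(n+2)})=0$.

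\textbf{Step 2: pass to eigenvalues.} Squaring gives ${\lambda}_{1,S,(n)}(o)={\kappa^2_{1,S,(n)}}={\kappa^2_{1,L,(n+2)}}={\lambda}_{1,L,(n+2)}(o)$.

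\textbf{Step 3: pass to Poincar\'e constants.} By \eqref{simple-tool}, $c_{p,S}=\lambda_{1,S}^{-1/2}$ and $c_p=\lambda_{1,L}^{-1/2}$ on the respective balls. Therefore $c_{p,S,(n)}({\Omega}^*_{(n)})=c_{p,(n+2)}({\Omega}^*_{(n+2)})$.

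The only step needing care is the identification in Note \ref{Trieb_Laplace_1} and Corollary \ref{Stokes_Unit_ball} of the first eigenvalue with the smallest root. In particular, for the Stokes operator one must know that no eigenvalue smaller than $\kappa_{1,S}^2$ arises from other spherical-harmonic sectors or from the pressure term. We take both results as given from \cite{RuRuTh2026}, so the theorem reduces to the index identity $\frac{n+2}{2}-1=\frac{n}{2}$.
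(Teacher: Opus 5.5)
Your proposal is correct and follows the paper's route: the paper quotes this result from \cite{RuRuTh2026} and justifies it the same way, by reading off \eqref{Stokes_einheitskugel_EW} in dimension $n$ and \eqref{Lapl_einheitskugel_EW} in dimension $n+2$, using $\frac{n+2}{2}-1=\frac{n}{2}$, and then passing to Poincar\'e constants via \eqref{simple-tool}. You are also right that the real content sits in Note~\ref{Trieb_Laplace_1} and Corollary~\ref{Stokes_Unit_ball} (that the first Stokes eigenvalue really is governed by $J_{\frac{n}{2}}$), which the paper likewise takes as given.
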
 
\begin{remark} \label{Besselzeros}
The values of $\kappa_{1,L,(n)}(o)$ for ${\Omega}_{(n)}^{*}$ are the
first roots of the corresponding Besselfunctions. 
In ascending dimensional order
 $n\,= \,2, \,3 ,\,4 ,\,5,\,6,\,7$ 
  we find $\kappa_{1,L,(2)}({o})\,\approx\, 2.404825558 $,\,\, 
$\kappa_{1,L,(3)}({o})\,=\,\pi $,\,\,$\kappa_{1,L,(4)} ({o})\,\approx\, 3.831705970 $,\,\,
$\kappa_{1,L,(5)}({o})\,\approx\,4.493409458 $,\,\,$\kappa_{1,L,(6)}({o})\,\approx\, 5.135622302 $,
$\kappa_{1,L,(7)}({o})\,\approx\,5.763459197 $,\,\,$\dots$\,\,.
\end{remark}
\noindent Now we are going to use the transformation $F(.)$ of the closed ball ${\overline{\tilde{\Omega}^{{\!{~}}^{\scriptstyle{*}}}}
}_{\hspace*{-.1cm}\!\tilde{\underline{x}},R,(n)}$ onto the closed unit ball 
${\overline{\White{\hat\Omega}^{{\!{~}}^{\scriptstyle{*}}}\hspace*{-.45cm}
\Omega}}_{(n)}$ (cf. \eqref{transf}) 
for the determination of the eigenvalues of ${\boldsymbol L}$ and ${\boldsymbol S}$ on ${{\tilde{\Omega}^{{\!{~}}^{\scriptstyle{*}}}}
}_{\hspace*{.1cm}\!\tilde{\underline{x}},R,(n)}$ with the special view on the 
first eigenvalues of ${\boldsymbol L}$ and ${\boldsymbol S}$.
\begin{remark} \label{transformationR} 
The radius  $R\,=\, \frac{1}{2}\text{diam}\,{\Omega}_{(n)}\,$ of \,$\,{{\tilde{\Omega}^{{\!{~}}^{\scriptstyle{*}}}}
}_{\hspace*{.1cm}\!\tilde{\underline{x}},R,(n)}$ is the significant 
weight or scaling factor by use of the transformation $F(.)$. Let us denote by 
$\{\lambda_{j,L,(n)}(o)\}_{j=1}^{\infty}$ and by 
$\{\lambda_{j,L,(n)}({{\tilde{\Omega}^{{\!{~}}^{\scriptstyle{*}}}}
}_{\hspace*{.1cm}\!\tilde{\underline{x}},R,(n)})\}_{j=1}^{\infty}$ the monotonically increasing sequences of the eigenvalues for ${\boldsymbol L}_{o}$ and ${\boldsymbol L}({{\tilde{\Omega}^{{\!{~}}^{\scriptstyle{*}}}}
}_{\hspace*{.1cm}\!\tilde{\underline{x}},R,(n)})$ counted in multiplicity. 
We apply the notations
$\{\lambda_{j,S,(n)}(o)\}_{j=1}^{\infty}$ and by 
$\{\lambda_{j,S,(n)}({{\tilde{\Omega}^{{\!{~}}^{\scriptstyle{*}}}}
}_{\hspace*{.1cm}\!\tilde{\underline{x}},R,(n)})\}_{j=1}^{\infty}$ the monotonically increasing sequences of the eigenvalues for ${\boldsymbol S}_{o}$ and ${\boldsymbol S}({{\tilde{\Omega}^{{\!{~}}^{\scriptstyle{*}}}}
}_{\hspace*{.1cm}\!\tilde{\underline{x}},R,(n)})$ counted in
multiplicity again. 
For all the eigenvalues a simple calculation with the the
transformation $F(.)$ leads to
\begin{align}\label{TransfEigenv} 
{\lambda_{j,L,(n)}}(o)\,=\, R^2\cdot {\lambda_{j,L,(n)}({{\tilde{\Omega}^{{\!{~}}^{\scriptstyle{*}}}}
}_{\hspace*{.1cm}\!\tilde{\underline{x}},R,(n)})} \,\,,\quad\, &\quad \nonumber\\[-.2cm]
{~} &\quad\forall \, j\in \NN\,.\\[-.2cm]
{\lambda_{j,S,(n)}}(o)\,=\, R^2\cdot {\lambda_{j,S,(n)}({{\tilde{\Omega}^{{\!{~}}^{\scriptstyle{*}}}}
}_{\hspace*{.1cm}\!\tilde{\underline{x}},R,(n)})} \,\,,\quad\, &\quad \nonumber
\end{align}
\end{remark}
\begin{note}\label{first_eigenv_R}
Let us 
take the open unit ball ${\Omega}_{(n)}^{*}\subset {\bb R}^{n}$ for $n\,\in\, {\bb N},\,1\,<\,n\,<\,\infty$.
We realize that the first simple eigenvalues of the Laplace 
operator ${\boldsymbol L}_{o}$ and the first eigenvalues of the 
Stokes operator ${\boldsymbol S}_{o}$ in ${\Omega}_{(n)}^{*}$ are the squares of 
the smallest positive solution $\kappa_{1,L,(n)}(o)$ and $\kappa_{1,S,(n)}(o)$ of
\begin{align}\label{Gucken}
J_{\frac{n}{2}-1}(\kappa_{L}) =0\quad \quad {\text{and}} \quad \quad J_{\frac{n}{2}}(\kappa_{S}) =0\,
\quad\quad \quad \,,\,\,\forall \,n\,\in\, {\bb N},\,1\,<\,n\,<\,\infty.
\end{align}
Using the radius  $R\,=\, \frac{1}{2}\text{diam}\,{\Omega}_{(n)}\,$ of \,$\,{{\tilde{\Omega}^{{\!{~}}^{\scriptstyle{*}}}}
}_{\hspace*{.1cm}\!\tilde{\underline{x}},R,(n)}$ we find $\forall \,
n\in \NN\,,\,1\,<\,n\,<\,\infty$ that
\begin{align}\label{Transf_1st_Eigenv} 
\left({\kappa_{1,L,(n)}}(o)\right)^2\,=\,
{\lambda_{1,L,(n)}}(o)\,=\, R^2{\lambda_{1,L,(n)}({{\tilde{\Omega}^{{\!{~}}^{\scriptstyle{*}}}}
}_{\hspace*{.1cm}\!\tilde{\underline{x}},R,(n)})} 
, \quad
  \left({\kappa_{1,S,(n)}}(o)\right)^2\,=\,
{\lambda_{1,S,(n)}}(o)\,=\, R^2{\lambda_{1,S,(n)}({{\tilde{\Omega}^{{\!{~}}^{\scriptstyle{*}}}}
}_{\hspace*{.1cm}\!\tilde{\underline{x}},R,(n)})} \,.
\end{align}
\end{note}
\section{Estimates of the Poincar\'e Constants \label{Est_Poi_sec}}
Subsequently we will study the monotonic behavior of the first simple eigenvalues 
${\lambda_{1,L,(n)}({{\tilde{\Omega}^{{\!{~}}^{\scriptstyle{*}}}}
}_{\hspace*{.1cm}\!\tilde{\underline{x}},R,(n)})}$ of the Laplace 
operator ${\boldsymbol L}$ and the first eigenvalues ${\lambda_{1,S,(n)}({{\tilde{\Omega}^{{\!{~}}^{\scriptstyle{*}}}}
}_{\hspace*{.1cm}\!\tilde{\underline{x}},R,(n)})}$
of the 
Stokes operator ${\boldsymbol S}$ on $\,{{\tilde{\Omega}^{{\!{~}}^{\scriptstyle{*}}}}
}_{\hspace*{.1cm}\!\tilde{\underline{x}},R,(n)}$ in relation to the first simple eigenvalues 
${\lambda_{1,L,(n)}({\Omega}_{(n)})}$ of the Laplace 
operator ${\boldsymbol L}$ and the first eigenvalues ${\lambda_{1,S,(n)}({\Omega}_{(n)})}$ 
of the 
Stokes operator ${\boldsymbol S}$ on $\,{\Omega}_{(n)}$, where in the
definition of the radius  $R\,=\,
\frac{1}{2}\text{diam}\,{\Omega}_{(n)}\,$ we suppose that $\,{\Omega}_{(n)}$ is our simply connected 
and convex ${\cal{C}}^{2}$-domain. 
In what follows we are going to use Courants max-min principle \cite{CouMaxmin} for linear selfadjoint operators on 
Hilbert spaces (with the well known counterpart of the minimax
principle for linear compact selfadjoint operators on  
Hilbert spaces as the view on inverse operators cf. \cite{Poinc}). We
are going to formulate the variation principle for the eigenvalues 
directly without applying the standard tool \cite[Satz 3 on page 355 in Bd. 1]{CouHil}. 
\subsection{Upper Bound Estimates of the Poincar\'e Constants for $\,{\Omega}_{(n)}$} \label{Poin_up_subsec}
It is our first step to
look for the first eigenvalues of the 
operators ${\boldsymbol L}_{o}$ and the first eigenvalues of the 
Stokes operator ${\boldsymbol S}_{o}$ on ${\Omega}_{(n)}^{*}$ to compare these eigenvalues with 
the corresponding first eigenvalues
of the Laplacian  ${\boldsymbol L}(.)$ and of the Stokes operator ${\boldsymbol S}(.)$ on ${\hat{\Omega}}_{(n)}$.
We start with the formulation of standard variation principles, 
\cite{CouHil}.
\begin{prop} \label{EW_Laplace_1} The Laplacians  ${\boldsymbol L}_{o}$ and ${\boldsymbol L}({\hat{\Omega}}_{(n)})$ on ${\Omega}_{(n)}^{*}$ and \, ${\hat{\Omega}}_{(n)}$ are selfadjoint and positive operators with a pure point spectrum
for every fixed
$n\,\in\,{\bb N}:\,n>1$.
The first eigenvalue
$\lambda_{1,L,(n)}(o)$ and  first eigenvalue $\lambda_{1,L,(n)}({\hat{\Omega}}_{(n)})$ are characterized
by the Dirichlet norm c.f. \eqref{Dirichlet}:
\begin{align} \label{n_Lapl_Einskug}
\lambda_{1,L,(n)}({o}) & = \,\min_{u \in {\bb W}_{2}^{1}\hspace{-.62cm}{~}^{{~}^{{~}^{{~}^{o}}}}\hspace{.1cm}({\Omega}_{(n)}^{*}): \| u \|_{{\bb L}_{2}(.)}=1 }{\;\!}{\|u\|_{D}^{\,2} } \,=\,
     \min_{u \in  {\bb W}_{2}^{1}\hspace{-.62cm}{~}^{{~}^{{~}^{{~}^{o}}}}\hspace{.1cm}
    ({\Omega}_{(n)}^{*}): 
\| u \|_{{\bb L}_{2}(.)}=1 }{\;\!}
    \int_{{\Omega}^{*}_{(n){~}}} 
    ({\underline{\nabla}} {\;\!} u({\underline{x}}))^T\cdot ({\underline{\nabla}} {\;\!} u({\underline{x}}))\
   d{\underline{x}} \, \quad  \nonumber\\
{~}  & \hspace*{12.62cm} \,\text{and} \\
\lambda_{1,L,(n)}({\hat{\Omega}}_{(n)}) & = \,
 \min_{u \in {\bb W}_{2}^{1}\hspace{-.62cm}{~}^{{~}^{{~}^{{~}^{o}}}}\hspace{.1cm}
    (\hat{\Omega}_{(n)}):
\| u \|_{{\bb L}_{2}(.)}=1 }{\;\!}
{\|u\|_{D}^{\,2} } \,=\,
     \min_{u \in {\bb W}_{2}^{1}\hspace{-.62cm}{~}^{{~}^{{~}^{{~}^{o}}}}\hspace{.1cm}
    (\hat{\Omega}_{(n)}):
\| u \|_{{\bb L}_{2}(.)}=1 }{\;\!}
    \int_{{\hat{\Omega}}_{(n)~}} 
    ({\underline{\nabla}} {\;\!}u({\underline{x}}))^T\cdot ({\underline{\nabla}} {\;\!}u({\underline{x}}))\
   d{\underline{x}} \, \quad .\nonumber
\end{align}  
For the eigenvalues we have the inequality
\begin{align}\label{mon_Lapl_Einskug}
\lambda_{1,L,(n)}({\hat{\Omega}}_{(n)})\,\geq \,\lambda_{1,L,(n)}({o})\,.
\end{align}
This relation is due to 
${\hat{\Omega}}_{(n)}\,\subset 
{\Omega}^{*}_{(n)}$. 
\end{prop}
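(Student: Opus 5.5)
The proposition has three parts: self-adjointness and a pure point spectrum, the variational characterization \eqref{n_Lapl_Einskug}, and the monotonicity \eqref{mon_Lapl_Einskug}. I would prove them in that order.

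\emph{Spectral properties.} These follow exactly as for the Stokes operator in Theorem \ref{thmstok} and Corollary \ref{STOeiFU}, with the scalar space ${\bb W}_{2}^{1}\hspace{-.62cm}{~}^{{~}^{{~}^{o}}}\hspace{.2cm}(.)$ in place of ${\underline{\bb V}}(.)$. Since both ${\Omega}^{*}_{(n)}$ and $\hat{\Omega}_{(n)}$ are bounded, the form $a(u,v)=\int(\underline{\nabla}u)^T\underline{\nabla}v\,d\underline{x}$ is coercive on ${\bb W}_{2}^{1}\hspace{-.62cm}{~}^{{~}^{{~}^{o}}}\hspace{.2cm}(.)$ by the Poincar\'e inequality \eqref{Poinc}. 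The Lax--Milgram lemma then gives a bounded, injective, self-adjoint inverse ${\boldsymbol L}^{-1}$ on ${\bb L}_2(.)$. The Rellich theorem makes this inverse compact, and Hilbert's spectral theorem yields the pure point spectrum $0<\lambda_1\le\lambda_2\le\dots\to\infty$.

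\emph{Variational characterization.} Expand any $u$ with $\|u\|_{{\bb L}_2}=1$ in the orthonormal eigenbasis $\{w_j\}$, writing $u=\sum_j \alpha_j w_j$. Since $\|u\|_D^2=a(u,u)=\sum_j\lambda_j|\alpha_j|^2\ge\lambda_1\sum_j|\alpha_j|^2=\lambda_1$, the infimum is at least $\lambda_1$. Equality is attained at $u=w_1$, so the infimum is a minimum, which is the Rayleigh/Courant principle. The only point to check is that $a(u,u)=\sum\lambda_j|\alpha_j|^2$ holds on the whole form domain, not merely on $D({\boldsymbol L})$. This follows because $\{w_j/\sqrt{\lambda_j}\}$ is also an orthonormal basis of ${\bb W}_{2}^{1}\hspace{-.62cm}{~}^{{~}^{{~}^{o}}}\hspace{.2cm}(.)$ with respect to $a$.

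\emph{Monotonicity.} The key step is domain monotonicity via extension by zero. Given $u\in{\bb W}_{2}^{1}\hspace{-.62cm}{~}^{{~}^{{~}^{o}}}\hspace{.2cm}(\hat{\Omega}_{(n)})$, take $\varphi_k\in C_o^\infty(\hat{\Omega}_{(n)})$ with $\varphi_k\to u$ in ${\bb W}_2^1$. Extended by zero, the $\varphi_k$ lie in $C_o^\infty({\Omega}^{*}_{(n)})$ and form a Cauchy sequence there, because the norms agree. Their limit $\tilde u$ therefore lies in ${\bb W}_{2}^{1}\hspace{-.62cm}{~}^{{~}^{{~}^{o}}}\hspace{.2cm}({\Omega}^{*}_{(n)})$ and satisfies $\|\tilde u\|_{{\bb L}_2}=\|u\|_{{\bb L}_2}$ and $\|\tilde u\|_D=\|u\|_D$. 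Hence the admissible set for $\hat{\Omega}_{(n)}$ embeds isometrically into the admissible set for ${\Omega}^{*}_{(n)}$. A minimum over a larger set is no larger, so $\lambda_{1,L,(n)}(o)\le\lambda_{1,L,(n)}(\hat{\Omega}_{(n)})$.

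I do not expect any real obstacle. The mildly delicate points are two. First, the extension-by-zero argument must be run through the closure of $C_o^\infty$, so that no boundary regularity of $\hat{\Omega}_{(n)}$ is needed. Second, $\hat{\Omega}_{(n)}=\underline{F}(\Omega_{(n)})$ must actually lie in ${\Omega}^{*}_{(n)}$. This holds by construction of $\underline{F}$ in \eqref{transf}, given that $\Omega_{(n)}\subset\tilde{\Omega}^*_{\,\tilde{\underline{x}},R,(n)}$.
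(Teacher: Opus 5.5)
Your proposal is correct and is essentially the paper's proof. The paper cites standard theory (Triebel) for the spectral properties where you sketch the Lax--Milgram/Rellich argument, obtains \eqref{n_Lapl_Einskug} from Courant's principle, and proves \eqref{mon_Lapl_Einskug} by identifying the zero-trace ${\bb W}_{2}^{1}$-functions on $\hat{\Omega}_{(n)}$ with their zero extensions in the corresponding space on ${\Omega}^{*}_{(n)}$ and minimizing over this smaller set. That is your extension-by-zero argument, except that the paper leaves implicit the density step you spell out. One caution on your final remark: $\Omega_{(n)}\subset\tilde{\Omega}^*_{\,\tilde{\underline{x}},R,(n)}$ with $R=\frac{1}{2}\text{diam}\,\Omega_{(n)}$ is a standing assumption of the paper, not an automatic fact. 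A convex ${\cal C}^2$ rounding of an equilateral triangle of side $d$ has diameter at most $d$ but needs an enclosing ball of radius close to $d/\sqrt{3}>d/2$ (cf. Jung's theorem), so you are right to treat $\hat{\Omega}_{(n)}\subset{\Omega}^{*}_{(n)}$ as a hypothesis, exactly as the proposition does.
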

\begin{proof} For the properties of the Laplacians  ${\boldsymbol
    L}_{o}$ and ${\boldsymbol L}({\hat{\Omega}}_{(n)})$ 
we cite the Theorem from Subsection 4.5.1 and Remark 4 from Subsection 6.2.2 in
\cite{Triebel}. The relations in \eqref{n_Lapl_Einskug} are a simple
consequence of Courant's max-min principle. 
The eigenvalues of ${\boldsymbol L}^{-1}_{o}$ and ${\boldsymbol
  L}^{-1}({\hat{\Omega}}_{(n)})$
are defined by the Minimax-principle cf. the Theorems from Subsection 4.1.7 and
Subsection 4.2.5 in \cite{Triebel}.
We introduce the spaces
\begin{align} \label{subsetsL}
{\bb L}_{2}({\Omega}^{*}_{{(n)}},0)\,:=\,&
\{v\,\in\,{\bb L}_{2}({\Omega}^{*}_{(n)}): v=0\,\, {\mbox{a.e. on }} {\Omega}^{*}_{(n)}
\setminus {\hat{\Omega}_{(n)}}\}\,\subset\,{\bb L}_{2}({\Omega}^{*}_{{(n)}})\,\,\,\,\quad\,\text{and}\,\nonumber\\[-.2cm]
{~} & {~}\\[-.2cm]
{\bb W}_{2}^{1}\hspace{-.62cm}{~}^{{~}^{{~}^{o}}}\hspace{.2cm}({\Omega}^{*}_{{(n)}},0)\,:=\,&
\{v\,\in\,{\bb W}_{2}^{1}\hspace{-.62cm}{~}^{{~}^{{~}^{o}}}\hspace{.2cm}({\Omega}^{*}_{{(n)}}): 
v_{|\hat{\Omega}_{(n)}}\,\in\,{\bb W}_{2}^{1}\hspace{-.62cm}{~}^{{~}^{{~}^{o}}}\hspace{.2cm}(\hat{\Omega}_{{(n)}})
\,\,\text{and}\,
v=0\,\, {\mbox{a.e. on }} {\Omega}^{*}_{(n)}
\setminus {\hat{\Omega}_{(n)}}\}\,\subset\,
{\bb W}_{2}^{1}\hspace{-.62cm}{~}^{{~}^{{~}^{o}}}\hspace{.2cm}({\Omega}^{*}_{{(n)}})\,\,.\nonumber
\end{align}
In view of \eqref{n_Lapl_Einskug} we observe that ${\bb W}_{2}^{1}\hspace{-.62cm}{~}^{{~}^{{~}^{o}}}\hspace{.2cm}
({\Omega}^{*}_{(n)},0))\,\cong \,{\bb W}_{2}^{1}\hspace{-.62cm}{~}^{{~}^{{~}^{o}}}\hspace{.2cm}(\hat{\Omega}_{(n)})\,$\,(
and ${\bb L}_{2}({\Omega}^{*}_{{(n)}},0)\,\cong \,{\bb L}_{2}(\hat{\Omega}_{{(n)}})$\,).
The proof of the inequality \eqref{mon_Lapl_Einskug} is a simple argument on extrema about real subsets.
For ${\Omega}^{*}_{(n)}\,\not=\,\hat{\Omega}_{{(n)}}$ with
$\hat{\Omega}_{{(n)}}\,\subset\,{\Omega}^{*}_{(n)}$ we find that:
\begin{align}\label{proof_mon_Lapl_Einskug}
\lambda_{1,L,(n)}({\hat{\Omega}}_{(n)})&\,=\,
\min_{u \in {\bb W}_{2}^{1}\hspace{-.62cm}{~}^{{~}^{{~}^{{~}^{o}}}}\hspace{.1cm}
(\hat{\Omega}_{(n)}):
\| u \|_{{\bb L}_{2}(.)}=1 }{\;\!}
    \int_{{\hat{\Omega}}_{(n)~}} 
    ({\underline{\nabla}} {\;\!}u({\underline{x}}))^T\cdot ({\underline{\nabla}} {\;\!}u({\underline{x}}))\
   d{\underline{x}} \,=\,\nonumber\\
{~}& \,=\,\min_{u \in {\bb W}_{2}^{1}\hspace{-.62cm}{~}^{{~}^{{~}^{{~}^{o}}}}\hspace{.1cm}
({\Omega}^{*}_{{(n)}},0):
\| u \|_{{\bb L}_{2}(.)}=1 }{\;\!}
    \int_{{{\Omega}}^*_{(n)~}} 
    ({\underline{\nabla}} {\;\!}u({\underline{x}}))^T\cdot ({\underline{\nabla}} {\;\!}u({\underline{x}}))\
   d{\underline{x}} \,\geq\, \\
{~}& \,\geq \,
\min_{u \in {\bb W}_{2}^{1}\hspace{-.62cm}{~}^{{~}^{{~},^{{~}^{o}}}}\hspace{.1cm}
    ({\Omega}_{(n)}^{*}): 
\| u \|_{{\bb L}_{2}(.)}=1 }{\;\!}
    \int_{{\Omega}^{*}_{(n){~}}} 
    ({\underline{\nabla}} {\;\!} u({\underline{x}}))^T\cdot ({\underline{\nabla}} {\;\!} u({\underline{x}}))\
   d{\underline{x}}\,\,= \,\,
   \lambda_{1,L,(n)}({o})\,\quad\,.\nonumber
\end{align}
\end{proof}
\noindent For the next Propositiown we refer again to \cite{CouHil}. 
\begin{prop} \label{EW_Laplace_2} The Laplacians  ${\boldsymbol L}({{\tilde{\Omega}^{{\!{~}}^{\scriptstyle{*}}}}
}_{\hspace*{.1cm}\!\tilde{\underline{x}},R,(n)})$ and ${\boldsymbol L}({{\Omega}}_{(n)})$ on ${{\tilde{\Omega}^{{\!{~}}^{\scriptstyle{*}}}}
}_{\hspace*{.1cm}\!\tilde{\underline{x}},R,(n)}$ and \, ${{\Omega}}_{(n)}$ are selfadjoint and positive operators with a pure point spectrum
for every fixed $n\,\in\,{\bb N}:\,n>1$.
The first eigenvalue
$\lambda_{1,L,(n)}({{\tilde{\Omega}^{{\!{~}}^{\scriptstyle{*}}}}
}_{\hspace*{.1cm}\!\tilde{\underline{x}},R,(n)})$ and  first eigenvalue $\lambda_{1,L,(n)}({{\Omega}}_{(n)})$ are characterized
by the Dirichlet norm c.f. \eqref{Dirichlet}:
\begin{align} \label{n_Lapl_DiaKug}
\lambda_{1,L,(n)}({{\tilde{\Omega}^{{\!{~}}^{\scriptstyle{*}}}}
}_{\hspace*{.1cm}\!\tilde{\underline{x}},R,(n)}) & = \,\min_{u \in {\bb W}_{2}^{1}\hspace{-.62cm}{~}^{{~}^{{~}^{{~}^{o}}}}\hspace{.1cm}({{\tilde{\Omega}^{{\!{~}}^{\scriptstyle{*}}}}
}_{\hspace*{.1cm}\!\tilde{\underline{x}},R,(n)}): \| u \|_{{\bb L}_{2}(.)}=1 }{\;\!}{\|u\|_{D}^{\,2} } \,=\,
     \min_{u \in  {\bb W}_{2}^{1}\hspace{-.62cm}{~}^{{~}^{{~}^{{~}^{o}}}}\hspace{.1cm}
    ({{\tilde{\Omega}^{{\!{~}}^{\scriptstyle{*}}}}
}_{\hspace*{.1cm}\!\tilde{\underline{x}},R,(n)}): 
\| u \|_{{\bb L}_{2}(.)}=1 }{\;\!}
    \int_{{{\tilde{\Omega}^{{\!{~}}^{\scriptstyle{*}}}}
}_{\hspace*{.1cm}\!\tilde{\underline{x}},R,(n){~}}} 
    ({\underline{\nabla}} {\;\!} u({\underline{x}}))^T\cdot ({\underline{\nabla}} {\;\!} u({\underline{x}}))\
   d{\underline{x}} \, \quad  \nonumber\\
{~}  & \hspace*{12.62cm} \,\text{and} \\
\lambda_{1,L,(n)}({{\Omega}}_{(n)}) & = \,
 \min_{u \in {\bb W}_{2}^{1}\hspace{-.62cm}{~}^{{~}^{{~}^{{~}^{o}}}}\hspace{.1cm}
    ({\Omega}_{(n)}):
\| u \|_{{\bb L}_{2}(.)}=1 }{\;\!}
{\|u\|_{D}^{\,2} } \,=\,
     \min_{u \in {\bb W}_{2}^{1}\hspace{-.62cm}{~}^{{~}^{{~}^{{~}^{o}}}}\hspace{.1cm}
    ({\Omega}_{(n)}):
\| u \|_{{\bb L}_{2}(.)}=1 }{\;\!}
    \int_{{{\Omega}}_{(n)~}} 
    ({\underline{\nabla}} {\;\!}u({\underline{x}}))^T\cdot ({\underline{\nabla}} {\;\!}u({\underline{x}}))\
   d{\underline{x}} \, \quad .\nonumber
\end{align}  
Using the inclusion ${{\Omega}}_{(n)}\,\subset \,{{\tilde{\Omega}^{{\!{~}}^{\scriptstyle{*}}}}
}_{\hspace*{.1cm}\!\tilde{\underline{x}},R,(n)}$ one gets the inequality for
the first eigenvalues:
\begin{align}\label{mon_Lapl_Diakug}
\lambda_{1,L,(n)}({{\Omega}}_{(n)})\,\geq \,\lambda_{1,L,(n)}({{\tilde{\Omega}^{{\!{~}}^{\scriptstyle{*}}}}
}_{\hspace*{.1cm}\!\tilde{\underline{x}},R,(n)})\,.
\end{align}
\end{prop}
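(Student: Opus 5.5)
The plan is to copy the proof of Proposition \ref{EW_Laplace_1} almost verbatim, with the pair $\hat{\Omega}_{(n)}\subset{\Omega}^{*}_{(n)}$ replaced by the pair ${\Omega}_{(n)}\subset\tilde{\Omega}^*_{\,\tilde{\underline{x}},R,(n)}$. The key step is the monotonicity inequality \eqref{mon_Lapl_Diakug}, which follows from Dirichlet domain monotonicity by extension by zero.

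\textbf{Operator properties.} Both $\tilde{\Omega}^*_{\,\tilde{\underline{x}},R,(n)}$ and ${\Omega}_{(n)}$ are bounded domains. The first is a ball; the second is convex and of class ${\cal C}^{2}$. For such domains I would cite again the Theorem of Subsection 4.5.1 and Remark 4 of Subsection 6.2.2 in \cite{Triebel}. These give that the Friedrichs extensions ${\boldsymbol L}(.)$ of Definition \ref{D4} are selfadjoint and positive. They also give that the inverses are compact via Rellich's theorem, so the spectrum is pure point.

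\textbf{Variational characterization.} The first eigenvalue equals the minimum of the Rayleigh quotient $\|u\|_D^2/\|u\|_2^2$ over ${\bb W}_{2}^{1}\hspace{-.62cm}{~}^{{~}^{{~}^{o}}}\hspace{.2cm}(.)$. This is Courant's max-min principle for $j=1$, or equivalently the minimax principle for the compact operator ${\boldsymbol L}^{-1}$ from Subsections 4.1.7 and 4.2.5 of \cite{Triebel}. The minimum is attained by the first eigenfunction. This gives \eqref{n_Lapl_DiaKug}.

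\textbf{Monotonicity.} As in \eqref{subsetsL}, I would introduce the subspace
\begin{align*}
{\bb W}_{2}^{1}\hspace{-.62cm}{~}^{{~}^{{~}^{o}}}\hspace{.2cm}(\tilde{\Omega}^*_{\,\tilde{\underline{x}},R,(n)},0)
\end{align*}
of functions in ${\bb W}_{2}^{1}\hspace{-.62cm}{~}^{{~}^{{~}^{o}}}\hspace{.2cm}(\tilde{\Omega}^*_{\,\tilde{\underline{x}},R,(n)})$ that vanish a.e. outside ${\Omega}_{(n)}$ and whose restriction lies in ${\bb W}_{2}^{1}\hspace{-.62cm}{~}^{{~}^{{~}^{o}}}\hspace{.2cm}({\Omega}_{(n)})$. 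The one point that needs care is that extension by zero is an isometry
\begin{align*}
{\bb W}_{2}^{1}\hspace{-.62cm}{~}^{{~}^{{~}^{o}}}\hspace{.2cm}({\Omega}_{(n)})\to{\bb W}_{2}^{1}\hspace{-.62cm}{~}^{{~}^{{~}^{o}}}\hspace{.2cm}(\tilde{\Omega}^*_{\,\tilde{\underline{x}},R,(n)}),
\end{align*}
preserving both the ${\bb L}_2$-norm and the Dirichlet norm. I would prove it on the dense set $C_o^\infty({\Omega}_{(n)})$, where the zero extension is again in $C_o^\infty(\tilde{\Omega}^*_{\,\tilde{\underline{x}},R,(n)})$ and the gradient vanishes outside ${\Omega}_{(n)}$. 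Passing to closures then identifies ${\bb W}_{2}^{1}\hspace{-.62cm}{~}^{{~}^{{~}^{o}}}\hspace{.2cm}({\Omega}_{(n)})$ isometrically with that subspace.

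\textbf{Conclusion.} The minimum in \eqref{n_Lapl_DiaKug} over ${\Omega}_{(n)}$ therefore equals the minimum of the same functional over a subset of the admissible class for the ball. A minimum over a subset cannot be smaller, which gives \eqref{mon_Lapl_Diakug}, exactly as in \eqref{proof_mon_Lapl_Einskug}.
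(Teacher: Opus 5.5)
Your proposal is correct and follows the paper's route. The paper cites \cite{Triebel} for the operator properties and Courant's max-min principle for the Rayleigh-quotient characterization. It then compares the minimum over the subspace of functions vanishing outside $\Omega_{(n)}$ with the minimum over the whole space on the ball, exactly as in \eqref{proof_mon_Lapl_Einskug}, and you do the same. Your explicit check that zero extension from $C_o^\infty(\Omega_{(n)})$ preserves both the $\mathbb{L}_2$-norm and the Dirichlet norm, and passes to closures, fills in the identification ($\cong$) that the paper only asserts.
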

\begin{proof} We follow the Proof of Proposition \ref{EW_Laplace_1}. One can repeat the steps of the Proof in general.
The spaces for the validation of the inequality for
the first eigenvalues \eqref{mon_Lapl_Diakug} are:
\begin{align} \label{subsetsL}
{\bb L}_{2}({{\tilde{\Omega}^{{\!{~}}^{\scriptstyle{*}}}}
}_{\hspace*{.1cm}\!\tilde{\underline{x}},R,(n)},0)\,:=\,&
\{v\,\in\,{\bb L}_{2}({{\tilde{\Omega}^{{\!{~}}^{\scriptstyle{*}}}}
}_{\hspace*{.1cm}\!\tilde{\underline{x}},R,(n)}): v=0\,\, \,{\mbox{a.e. on }} {{\tilde{\Omega}^{{\!{~}}^{\scriptstyle{*}}}}
}_{\hspace*{.1cm}\!\tilde{\underline{x}},R,(n)}
\setminus {{\Omega}_{(n)}}\}\,\subset\,{\bb L}_{2}({{\tilde{\Omega}^{{\!{~}}^{\scriptstyle{*}}}}
}_{\hspace*{.1cm}\!\tilde{\underline{x}},R,(n)})\,\,\,\,\quad\,\text{and}\,\nonumber\\[-.2cm]
{~} & {~}\\[-.2cm]
{\bb W}_{2}^{1}\hspace{-.62cm}{~}^{{~}^{{~}^{o}}}\hspace{.2cm}({{\tilde{\Omega}^{{\!{~}}^{\scriptstyle{*}}}}
}_{\hspace*{.1cm}\!\tilde{\underline{x}},R,(n)},0)\,:=\,&
\{v\,\in\,{\bb W}_{{\text{resp.}}2}^{1}\hspace{-.62cm}{~}^{{~}^{{~}^{o}}}\hspace{.2cm}({{\tilde{\Omega}^{{\!{~}}^{\scriptstyle{*}}}}
}_{\hspace*{.1cm}\!\tilde{\underline{x}},R,(n)}): 
v_{|{\Omega}_{(n)}}\,\in\,{\bb W}_{2}^{1}\hspace{-.62cm}{~}^{{~}^{{~}^{o}}}\hspace{.2cm}({\Omega}_{{(n)}})
\,\,\text{and}\,
v=0\,\, {\mbox{a.e. on }} {{\tilde{\Omega}^{{\!{~}}^{\scriptstyle{*}}}}
}_{\hspace*{.1cm}\!\tilde{\underline{x}},R,(n)}
\setminus {{\Omega}_{(n)}}\}\,\subset\,
{\bb W}_{2}^{1}\hspace{-.62cm}{~}^{{~}^{{~}^{o}}}\hspace{.2cm}({{\tilde{\Omega}^{{\!{~}}^{\scriptstyle{*}}}}
}_{\hspace*{.1cm}\!\tilde{\underline{x}},R,(n)})\,\,.\nonumber
\end{align}
The proof of the inequality \eqref{mon_Lapl_Diakug} 
is a simple argument on extrema about real subsets again. Finally one repeats the steps similar to \eqref{proof_mon_Lapl_Einskug}.
\end{proof} 
\noindent  In view of Note \ref{first_eigenv_R} we now look for similar properties of the first 
Stokes eigenvalues. The application of \cite[Satz 3 on page 355 in Bd. 1]{CouHil} and of the results
of \cite{Weid} is not possible here, but the direct use of Courant's
max-min principle works very well.
Additionally we use Corollary \ref{STOeiFU} for the properties of the Stokes operator.
\begin{prop} \label{EW_Stokes_12} 
Let ${\breve{\Omega}}$ stand as shorthand for any of the domains 
$\Omega_{(n)}$, $\tilde{\Omega}^*_{\,\tilde{\underline{x}},R,(n)}$, 
${\hat{\Omega}}_{(n)}$ and 
${\Omega}^*_{(n)}$ like in the {\bf General notation B}. The Stokes operators ${\boldsymbol S}({\breve{\Omega}}_{(n)})$ on ${\breve{\Omega}}_{(n)}$ are selfadjoint and positive operators with a pure point spectrum
for every fixed
$n\,\in\,{\bb N}:\,n>1$.
The first eigenvalue $\lambda_{1,S,(n)}({\breve{\Omega}}_{(n)})$ for each domain is characterized
by the Dirichlet norm:
\begin{align} \label{n_Stokes_alle}
\lambda_{1,S,(n)}({\breve{\Omega}}_{(n)}) & = \,
 \min_{\underline{u} \in {\underline{\bb V}}\hspace{.0cm}
    (\breve{\Omega}_{(n)}):
\| \underline{u}  \|_{{\underline{\bb L}_{2}(.)}}=1 }{\;\!}
{\|\underline{u} \|_{D,S}^{\,2} } \,=\, \min_{\underline{u} \in {\underline{\bb V}}\hspace{.0cm}
    (\breve{\Omega}_{(n)}):
\| \underline{u}  \|_{{\underline{\bb L}_{2}(.)}}=1 }{\;\!}
    \int_{{\breve{\Omega}}_{(n)~}} \big(
    \sum_{j=1}^{n}\sum_{j=1}^{n}\,\left(\frac{\partial u_j}{\partial x_k}\right)^2\big)
   d{\underline{x}}  \quad ,
\end{align} 
where the Frobenius inner product is part of the definition in the Dirichlet norm 
\eqref{Dirichlet}. 
For the eigenvalues we observe
\begin{align}\label{mon_Stokes__12}
\lambda_{1,S,(n)}({\hat{\Omega}}_{(n)})\,\geq \,\lambda_{1,S,(n)}({o})\,\quad\,{\text{and}}\,\quad\,
\lambda_{1,S,(n)}({{\Omega}}_{(n)})\,\geq \,\lambda_{1,S,(n)}({{\tilde{\Omega}^{{\!{~}}^{\scriptstyle{*}}}}
}_{\hspace*{.1cm}\!\tilde{\underline{x}},R,(n)})
\end{align}
due to  
${\hat{\Omega}}_{(n)}\,\subset\, 
{\Omega}^{*}_{(n)}$\,\,respectively\,\,$\Omega_{(n)}\,\subset\, \tilde{\Omega}^*_{\,\tilde{\underline{x}},R,(n)}$. 
Again we use the notation $\lambda_{1,S,(n)}({o})$ for $\lambda_{1,S,(n)}({\Omega}^*_{(n)})$
in \eqref{mon_Stokes__12}.
\end{prop}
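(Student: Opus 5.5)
The plan mirrors the proofs of Propositions \ref{EW_Laplace_1} and \ref{EW_Laplace_2}, with ${\bb W}_{2}^{1}\hspace{-.62cm}{~}^{{~}^{{~}^{o}}}\hspace{.2cm}(.)$ replaced by ${\underline{\bb V}}(.)$. There are three steps: the operator properties, the variational formula, and the monotonicity.

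\emph{Operator properties.} Self-adjointness, positivity and the pure point spectrum of ${\boldsymbol S}({\breve{\Omega}}_{(n)})$ follow directly from Theorem \ref{thmstok} and Corollary \ref{STOeiFU}. These apply to each of the four bounded domains: the balls, $\Omega_{(n)}$, and its affine image $\hat{\Omega}_{(n)}=\underline{F}(\Omega_{(n)})$, which is again convex and of class ${\cal C}^2$.

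\emph{Variational characterization \eqref{n_Stokes_alle}.} I would expand $\underline{u}\in{\underline{\bb V}}(.)$ in the orthonormal eigenbasis $\{\underline{w}_j\}$ of Corollary \ref{STOeiFU}, which is orthogonal also in ${\underline{\bb V}}(.)$. The key identity is
\[
(\underline{w}_j,\underline{w}_k)_{D,S}=({\boldsymbol S}\underline{w}_j,\underline{w}_k)_{{\underline{\bb L}}_2}=\lambda_j\delta_{jk}.
\]
It holds because, for $\underline{v}\in{\underline{\bb V}}$, one has $(-\Upsilon\Delta\underline{w},\underline{v})=(-\Delta\underline{w},\underline{v})$, and the latter equals the Frobenius Dirichlet form after integration by parts; here the pressure gradient in \eqref{Sto_Class} drops out since $\mbox{div}\,\underline{v}=0$ and $\underline{v}=\underline{0}$ on the boundary. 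Consequently
\[
\|\underline{u}\|_{D,S}^2=\sum_j\lambda_j|c_j|^2\geq\lambda_1\sum_j|c_j|^2=\lambda_1\|\underline{u}\|_{{\underline{\bb L}}_2}^2,
\]
with equality for $\underline{u}=\underline{w}_1$. So the infimum is attained and is a minimum. This is Courant's principle used directly, as announced before the statement.

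\emph{Monotonicity \eqref{mon_Stokes__12}.} This is the only step needing genuine care, and it is where the restriction mentioned in the text (that \cite{CouHil} and \cite{Weid} cannot be applied verbatim) becomes relevant. Take the smaller domain $D_1$, which is $\hat{\Omega}_{(n)}$ or $\Omega_{(n)}$, inside the ball $D_2$, which is $\Omega^*_{(n)}$ or $\tilde{\Omega}^*_{\tilde{\underline{x}},R,(n)}$. I would define the subspace ${\underline{\bb V}}(D_2,0)$ of fields in ${\underline{\bb V}}(D_2)$ vanishing a.e. on $D_2\setminus D_1$ whose restriction lies in ${\underline{\bb V}}(D_1)$. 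The claim is that extension by zero, $\underline{u}\mapsto\underline{u}^0$, maps ${\underline{\bb V}}(D_1)$ isometrically into ${\underline{\bb V}}(D_2)$, for both the $\underline{\bb L}_2$ and the Dirichlet norms. For $\underline{u}\in\underline{\cal V}(D_1)$ this is immediate: the zero extension is smooth with compact support in $D_2$ and still solenoidal. By density and continuity of the isometry it passes to the closures. The solenoidal constraint survives precisely because we extend from the dense core $\underline{\cal V}(D_1)$ rather than from arbitrary $\underline{\bb W}^1_2$ fields.

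With this, ${\underline{\bb V}}(D_1)\cong{\underline{\bb V}}(D_2,0)\subset{\underline{\bb V}}(D_2)$. The minimum in \eqref{n_Stokes_alle} over the smaller set is then at least the minimum over the larger set, exactly as in \eqref{proof_mon_Lapl_Einskug}. Applying this to both pairs of domains gives both inequalities in \eqref{mon_Stokes__12}.
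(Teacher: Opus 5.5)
Your proposal is correct and takes essentially the same route as the paper. The minimum formula \eqref{n_Stokes_alle} comes from Courant's principle, which you make explicit via the expansion $\|\underline{u}\|_{D,S}^2=\sum_j\lambda_j|c_j|^2$ in the Stokes eigenbasis, and the inequalities \eqref{mon_Stokes__12} come from identifying ${\underline{\bb V}}(D_1)$ with the zero-extension subspace ${\underline{\bb V}}(D_2,\underline{0})\subset{\underline{\bb V}}(D_2)$ and comparing minima over nested sets, exactly as in \eqref{proof_mon_Stokes_Einskug}; your density argument via $\underline{\cal V}(D_1)$ supplies the justification the paper leaves implicit.
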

\begin{proof} We note, that \eqref{n_Stokes_alle} is a simple
  consequence of Courant’s max-min principle.
We introduce the spaces
\begin{align} \label{subsetsSEinskugel}
{\underline{\bb L}}_{{\;\!}{2}}
({\Omega}^{*}_{(n)},\underline{0})\,:=\,&
\{{\underline{v}}\,\in\,{\underline{\bb L}}_{{\;\!}{2}}({\Omega}^{*}_{(n)}): \underline{v} =\underline{0} \,\, \,{\mbox{a.e. on }} {\Omega}^{*}_{(n)}
\setminus {\hat{\Omega}_{(n)}}\}\,\subset\,{\underline{\bb L}}_{{\;\!}{2}}({\Omega}^{*}_{(n)})\,\,\,\,\quad\,\text{and}\,\nonumber\\[-.2cm]
{~} & {~}\\[-.2cm]
{\underline{\bb V}} \hspace{.1cm}({\Omega}^{*}_{(n)},\underline{0})\,:=\,&
\{{\underline{v}}\,\in\,{\underline{\bb V}}\hspace{.1cm}({\Omega}^{*}_{(n)}): 
{\underline{v}}_{|{\hat\Omega}_{(n)}}\,\in\,{\underline{\bb V}}
\hspace{.1cm}({\hat\Omega}_{{(n)}})
\,\,\text{and}\,
\underline{v} =\underline{0} \,\, {\mbox{a.e. on }} {\Omega}^{*}_{(n)}
\setminus {\hat{\Omega}_{(n)}}\}\,\subset\,
{\underline{\bb V}}\hspace*{.1cm}({\Omega}^{*}_{(n)})\,\,,\, \nonumber
\end{align}
respectively
\begin{align} \label{subsetsS}
{\underline{\bb L}}_{{\;\!}{2}}
({{\tilde{\Omega}^{{\!{~}}^{\scriptstyle{*}}}}
}_{\hspace*{.1cm}\!\tilde{\underline{x}},R,(n)},\underline{0})\,:=\,&
\{{\underline{v}}\,\in\,{\underline{\bb L}}_{{\;\!}{2}}({{\tilde{\Omega}^{{\!{~}}^{\scriptstyle{*}}}}
}_{\hspace*{.1cm}\!\tilde{\underline{x}},R,(n)}): \underline{v} =\underline{0} \,\, \,{\mbox{a.e. on }} 
{{\tilde{\Omega}^{{\!{~}}^{\scriptstyle{*}}}}
}_{\hspace*{.1cm}\!\tilde{\underline{x}},R,(n)}
\setminus {{\Omega}_{(n)}}\}\,\subset\,{\underline{\bb L}}_{{\;\!}{2}}({{\tilde{\Omega}^{{\!{~}}^{\scriptstyle{*}}}}
}_{\hspace*{.1cm}\!\tilde{\underline{x}},R,(n)})\,\,\,\,\quad\,\text{and}\,\nonumber\\[-.2cm]
{~} & {~}\\[-.2cm]
{\underline{\bb V}} \hspace{.1cm}({{\tilde{\Omega}^{{\!{~}}^{\scriptstyle{*}}}}}_{\hspace*{.1cm}\!\tilde{\underline{x}},R,(n)},\underline{0})\,:=\,&
\{{\underline{v}}\,\in\,{\underline{\bb V}}\hspace{.1cm}({{\tilde{\Omega}^{{\!{~}}^{\scriptstyle{*}}}}
}_{\hspace*{.1cm}\!\tilde{\underline{x}},R,(n)}): 
{\underline{v}}_{|{\Omega}_{(n)}}\,\in\,{\underline{\bb V}}
\hspace{.1cm}({\Omega}_{{(n)}})
\,\,\text{and}\,
\underline{v} =\underline{0} \,\, {\mbox{a.e. on }} {{\tilde{\Omega}^{{\!{~}}^{\scriptstyle{*}}}}
}_{\hspace*{.1cm}\!\tilde{\underline{x}},R,(n)}
\setminus {{\Omega}_{(n)}}\}\,\subset\,
{\underline{\bb V}}\hspace*{.1cm}({{\tilde{\Omega}^{{\!{~}}^{\scriptstyle{*}}}}
}_{\hspace*{.1cm}\!\tilde{\underline{x}},R,(n)})\,\,.\nonumber
\end{align}
We show the first inequality of \eqref{mon_Stokes__12} as an example of the technique of proving,
regarding that ${\Omega}^{*}_{(n)}\,\not=\,\hat{\Omega}_{{(n)}}$ by $\hat{\Omega}_{{(n)}}\,\subset\,{\Omega}^{*}_{(n)}$:
\begin{align}\label{proof_mon_Stokes_Einskug}
\lambda_{1,S,(n)}({\hat{\Omega}}_{(n)})&\,=\,
\min_{\underline{u} \in {\underline{\bb V}}\hspace{.0cm}
    (\hat{\Omega}_{(n)}):
\| \underline{u}  \|_{{\underline{\bb L}_{2}(.)}}=1 }{\;\!}
    \int_{{\hat{\Omega}}_{(n)~}} \big(
    \sum_{j=1}^{n}\sum_{j=1}^{n}\,\left(\frac{\partial u_j}{\partial x_k}\right)^2\big)
   d{\underline{x}} 
\,=\,\nonumber\\
{~}& \,=\,  
\min_{\underline{u} \in {\underline{\bb V}}\hspace{.0cm}
({\Omega}^{*}_{(n)},\underline{0}):
\| \underline{u}  \|_{{\underline{\bb L}_{2}(.)}}=1 }{\;\!}
    \int_{{\Omega}^{*}
    _{(n)~}} \big(
    \sum_{j=1}^{n}\sum_{j=1}^{n}\,\left(\frac{\partial u_j}{\partial x_k}\right)^2\big)
   d{\underline{x}} 
\,\geq\, \\
{~}& \,\geq \,
\min_{\underline{u} \in {\underline{\bb V}}\hspace{.0cm}
({\Omega}^{*}_{(n)}):
\| \underline{u}  \|_{{\underline{\bb L}_{2}(.)}}=1 }{\;\!}
    \int_{{\Omega}^{*}
    _{(n)~}} \big(
    \sum_{j=1}^{n}\sum_{j=1}^{n}\,\left(\frac{\partial u_j}{\partial x_k}\right)^2\big)
   d{\underline{x}} 
\,\,= \,\,
   \lambda_{1,S,(n)}({o})\,\quad\,.\nonumber
\end{align}
\end{proof}
\begin{corollary}
\label{Poi_Eigv_Omega}
The Poincar\'e constants $c_{p}({{\Omega}}_{(n)})$ and $c_{p,S}({{\Omega}}_{(n)})$ with respect to the spaces ${\bb
  W}_{2}^{1}\hspace{-.62cm}{~}^{{~}^{{~}^{o}}}\hspace{.2cm}({{\Omega}}_{(n)})$ 
and  ${\underline{\bb V}}({{\Omega}}_{(n)})$ 
are bounded in the following way
\begin{align} \label{R_estim_PC}
 c_{p}({{\Omega}}_{(n)})\,\leq \, R\cdot{\kappa^{-1}_{1,L,(n)}(o)}
\,\, \quad\quad\text{resp.}\,\,\,\quad 
c_{p,S}({{\Omega}}_{(n)})\,\leq \, R\cdot{\kappa^{-1}_{1,S,(n)}(o)}\,=\,R\cdot{\kappa^{-1}_{1,L,(n+2)}(o)}
\,.
\end{align}
There we have used the smallest positive roots $\kappa_{1,L,(n)}(o)$ and $\kappa_{1,S,(n)}(o)\,=\,\kappa_{1,L,(n+2)}(o)$
of the corresponding Besselfunctions
$J_{\frac{n}{2}-1}(.)$ and $J_{\frac{n+2}{2}-1}(.)$.
\end{corollary}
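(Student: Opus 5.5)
The plan is to chain together three earlier results: the relation \eqref{simple-tool} between Poincar\'e constants and first eigenvalues, the domain monotonicity \eqref{mon_Lapl_Diakug} resp. the second inequality in \eqref{mon_Stokes__12}, and the scaling identity \eqref{Transf_1st_Eigenv} from Note \ref{first_eigenv_R}. No new variational argument should be needed. All the analytic work (the minimum characterizations and the zero extension from $\Omega_{(n)}$ into the ball) has already been done in Propositions \ref{EW_Laplace_2} and \ref{EW_Stokes_12}.

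\textbf{Scalar case.} First, by \eqref{simple-tool} we have $c_{p}(\Omega_{(n)}) = \lambda_{1,L,(n)}(\Omega_{(n)})^{-1/2}$. Since $\Omega_{(n)}\subset \tilde{\Omega}^*_{\,\tilde{\underline{x}},R,(n)}$, Proposition \ref{EW_Laplace_2} gives
$$\lambda_{1,L,(n)}(\Omega_{(n)})\geq \lambda_{1,L,(n)}(\tilde{\Omega}^*_{\,\tilde{\underline{x}},R,(n)}).$$
Next, \eqref{Transf_1st_Eigenv} identifies the right-hand side as $R^{-2}\kappa_{1,L,(n)}(o)^2$. This identity comes from the change of variables $\underline{F}$ in \eqref{transf}: the Dirichlet integral scales by $R^{n-2}$ and the $L_2$ norm squared by $R^{n}$. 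Taking the inverse square root, which reverses the inequality, yields $c_{p}(\Omega_{(n)})\le R\,\kappa^{-1}_{1,L,(n)}(o)$.

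\textbf{Vector case.} The Stokes estimate follows the same three steps:
\begin{itemize}
\item \eqref{simple-tool} gives $c_{p,S}(\Omega_{(n)})=\lambda_{1,S,(n)}(\Omega_{(n)})^{-1/2}$;
\item the second inequality of \eqref{mon_Stokes__12} compares $\Omega_{(n)}$ with the ball;
\item the second identity in \eqref{Transf_1st_Eigenv} gives $\lambda_{1,S,(n)}(\tilde{\Omega}^*_{\,\tilde{\underline{x}},R,(n)})=R^{-2}\kappa_{1,S,(n)}(o)^2$.
\end{itemize}
For the scaling of the Stokes problem, one should note that $\underline{F}$ is a translation composed with a dilation. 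The transformed field $\underline{v}\circ\underline{F}^{-1}$ therefore stays solenoidal with vanishing trace, so $\underline{\bb V}$ is mapped onto $\underline{\bb V}$. The final equality $\kappa_{1,S,(n)}(o)=\kappa_{1,L,(n+2)}(o)$ is exactly the shift Theorem \ref{thm_shift_Lapl_Stokes_n}. Equivalently, both are the first positive zero of $J_{\frac n2}=J_{\frac{n+2}{2}-1}$ by \eqref{Gucken}.

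\textbf{Main obstacle.} The only delicate point is the justification of the scaling identity \eqref{Transf_1st_Eigenv}, which is stated in Remark \ref{transformationR} as a ``simple calculation''. If needed, I would verify it directly. For $u\in {\bb W}_{2}^{1}\hspace{-.62cm}{~}^{{~}^{{~}^{o}}}\hspace{.2cm}(\Omega^*_{(n)})$, set $\tilde u(\underline{x}) = u(\underline{F}(\underline{x}))$. Then $\underline{\nabla}\tilde u = R^{-1}(\underline{\nabla} u)\circ \underline{F}$ and $d\underline{x} = R^n\,d\underline{y}$. Hence the Rayleigh quotients satisfy $\|\tilde u\|_D^2/\|\tilde u\|_2^2 = R^{-2}\|u\|_D^2/\|u\|_2^2$, and likewise componentwise for vector fields. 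Since $\underline{F}$ is a bijection between the respective admissible spaces, the minima in \eqref{n_Lapl_DiaKug} and \eqref{n_Stokes_alle} correspond.
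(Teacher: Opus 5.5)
Your proposal is correct and is essentially the paper's proof. The inclusion $\Omega_{(n)}\subset\tilde{\Omega}^*_{\,\tilde{\underline{x}},R,(n)}$ gives \eqref{mon_Lapl_Diakug} and the second inequality in \eqref{mon_Stokes__12}, the scaling identity \eqref{Transf_1st_Eigenv} turns the ball eigenvalues into $R^{-2}\kappa^{2}_{1,L,(n)}(o)$ resp.\ $R^{-2}\kappa^{2}_{1,S,(n)}(o)$, and \eqref{simple-tool} together with Theorem \ref{thm_shift_Lapl_Stokes_n} yields \eqref{R_estim_PC}; your Rayleigh-quotient check just spells out the ``simple calculation'' behind \eqref{TransfEigenv}.

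One caveat you share with the paper: the inclusion of $\Omega_{(n)}$ in a ball of radius $R=\frac{1}{2}\text{diam}\,\Omega_{(n)}$ is a standing assumption of the setup, not a consequence of the choice of $R$. By Jung's theorem the circumradius can be as large as $\text{diam}\,\Omega_{(n)}\sqrt{n/(2n+2)}$, e.g.\ close to $\text{diam}\,\Omega_{(n)}/\sqrt{3}$ for a smoothed equilateral triangle. Without that assumption, the same argument gives \eqref{R_estim_PC} only with $R$ replaced by the circumradius.
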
 
\begin{proof}
We apply Proposition \ref{EW_Laplace_2} with the inequality for
the first eigenvalues of the Laplacians \eqref{mon_Lapl_Diakug}
\begin{align}
\lambda_{1,L,(n)}({{\Omega}}_{(n)})\,\geq \,\lambda_{1,L,(n)}({{\tilde{\Omega}^{{\!{~}}^{\scriptstyle{*}}}}
}_{\hspace*{.1cm}\!\tilde{\underline{x}},R,(n)})\,\,,\nonumber
\end{align}
Proposition \ref{EW_Stokes_12}  
with the inequality for
the first eigenvalues of the Stokes operators 
\eqref{mon_Stokes__12}
\begin{align}
\lambda_{1,S,(n)}({{\Omega}}_{(n)})\,\geq \,\lambda_{1,S,(n)}({{\tilde{\Omega}^{{\!{~}}^{\scriptstyle{*}}}}
}_{\hspace*{.1cm}\!\tilde{\underline{x}},R,(n)})\,,\nonumber
\end{align}
and the equations \eqref{Transf_1st_Eigenv} from Note \ref{first_eigenv_R}
\begin{align} 
\left({\kappa_{1,L,(n)}}(o)\right)^2\,=\,
{\lambda_{1,L,(n)}}(o)\,=\, R^2\cdot {\lambda_{1,L,(n)}({{\tilde{\Omega}^{{\!{~}}^{\scriptstyle{*}}}}
}_{\hspace*{.1cm}\!\tilde{\underline{x}},R,(n)})} \,\,\quad\, &\quad \nonumber\\[-.2cm]
{~} &\quad;\,\,\forall \, n\in \NN\,,\,1\,<\,n\,<\,\infty\,\,.\nonumber\\[-.2cm]
\left({\kappa_{1,S,(n)}}(o)\right)^2\,=\,
{\lambda_{1,S,(n)}}(o)\,=\, R^2\cdot {\lambda_{1,S,(n)}({{\tilde{\Omega}^{{\!{~}}^{\scriptstyle{*}}}}
}_{\hspace*{.1cm}\!\tilde{\underline{x}},R,(n)})} \,\,\quad\, &\quad \nonumber
\end{align}
A simple calculation leads to
\begin{align} \label{R-Absch_EW}
\lambda_{1,L,(n)}({{\Omega}}_{(n)})\,\geq \,\lambda_{1,L,(n)}({{\tilde{\Omega}^{{\!{~}}^{\scriptstyle{*}}}}
}_{\hspace*{.1cm}\!\tilde{\underline{x}},R,(n)})\,=\,&\frac{{\lambda_{1,L,(n)}}(o)}{R^2}
\,\, \quad\quad\text{resp.}\,\,\,\nonumber\\[-.2cm]
{~} & {~} \\[-.2cm]
\lambda_{1,S,(n)}({{\Omega}}_{(n)})\,\geq \,\lambda_{1,S,(n)}({{\tilde{\Omega}^{{\!{~}}^{\scriptstyle{*}}}}
}_{\hspace*{.1cm}\!\tilde{\underline{x}},R,(n)})\,=\,&\frac{{\lambda_{1,S,(n)}}(o)}{R^2}\,=\,
\frac{{\lambda_{1,L,(n+2)}}(o)}{R^2}\,\,.\nonumber
\end{align}
In the last relation of \eqref{R-Absch_EW} the shift Theorem
\ref{thm_shift_Lapl_Stokes_n} comes in handy.
The eigenvalues ${\lambda_{1,L,(n)}}(o)$ and ${\lambda_{1,S,(n)}}(o)={\lambda_{1,L,(n+2)}}$ are the squares of
the smallest positive roots of the corresponding Besselfunctions $J_{\frac{n}{2}-1}(.)$ and $J_{\frac{n+2}{2}-1}(.)$:
$\kappa_{1,L,(n)}(o)$ and
$\kappa_{1,S,(n)}(o)\,=\,\kappa_{1,L,(n+2)}(o)$.  
That means
$J_{\frac{n}{2}-1}(\kappa_{1,L,(n+2)})\,=\,0$ and
$J_{\frac{n}{2}}(\kappa_{1,S,(n)})\,=\,J_{\frac{n+2}{2}-1}(\kappa_{1,L,(n+2)})\,=\,0\,$ 
(cf. also Remark \ref{Besselzeros}).
So for the Poincar\'e constants $c_{p}({{\Omega}}_{(n)})$ and
$c_{p,S}({{\Omega}}_{(n)})$ we find
\begin{align} \label{R_absch__proof_PC}
 c_{p}({{\Omega}}_{(n)})\,\leq \, c_{p}({{\tilde{\Omega}^{{\!{~}}^{\scriptstyle{*}}}}
}_{\hspace*{.1cm}\!\tilde{\underline{x}},R,(n)})\,=\,&\,R \cdot c_{p}({{\Omega^*}}_{(n)})\,=\,\frac{R}{\kappa_{1,L,(n)}(o)}
\,\, \quad\quad\text{resp.}\,\,\,\nonumber\\[-.2cm]
{~} & {~} \\[-.2cm]
c_{p,S}({{\Omega}}_{(n)})\,\leq \,c_{p,S}({{\tilde{\Omega}^{{\!{~}}^{\scriptstyle{*}}}}
}_{\hspace*{.1cm}\!\tilde{\underline{x}},R,(n)})\,=\,&R \cdot c_{p,S}({\Omega}^{*}_{(n)})
\,=\,\frac{R}{\kappa_{1,L,(n+2)}(o)}
\,.\nonumber
\end{align}
\end{proof}
\begin{remark}\label{Poi_Eigv_OmegaII}  
One can prove Corollary \ref{Poi_Eigv_Omega} for the 
Poincar\'e constants using the transformation $F(.)$ (cf. Note \ref{transformationR}) also.
There we take the
radius  $R\,=\, \frac{1}{2}\text{diam}\,{\Omega}_{(n)}\,$ of \,$\,{{\tilde{\Omega}^{{\!{~}}^{\scriptstyle{*}}}}
}_{\hspace*{.1cm}\!\tilde{\underline{x}},R,(n)}$ as the significant 
scaling factor at the transformation $F(.)$, where $F(.)$ maps the closed ball
${\overline{\tilde{\Omega}^{{\!{~}}^{\scriptstyle{*}}}}
}_{\hspace*{-.1cm}\!\tilde{\underline{x}},R,(n)}$ onto the closed unit ball 
${\overline{\White{\hat\Omega}^{{\!{~}}^{\scriptstyle{*}}}\hspace*{-.45cm}
\Omega}}_{(n)}$ (cf. \eqref{transf}). Analogously to \eqref{TransfEigenv}
this yields
\begin{align}\label{TransfEigenvOmega} 
{\lambda_{j,L,(n)}}({\hat\Omega}_{(n)})\,=\, R^2\cdot {\lambda_{j,L,(n)}({\Omega}_{(n)})} \,\,\quad\, &\quad \nonumber\\[-.2cm]
{~} &\quad\forall \, j\in \NN\,.\\[-.2cm]
{\lambda_{j,S,(n)}}({\hat\Omega}_{(n)})\,=\, R^2\cdot  {\lambda_{j,S,(n)}({\Omega}_{(n)})} \,\,\quad\, &\quad \nonumber
\end{align}
Finally we have to use Proposition \ref{EW_Laplace_1} (namely, relation \eqref{mon_Lapl_Einskug}) and the first relation of
\eqref{mon_Stokes__12} in Proposition \ref{EW_Stokes_12}.
\end{remark}
\subsection{Lower and upper Bounds of the Poincar\'e Constants for $\,{\Omega}_{(n)}$ and Evaluation of our Results} \label{sec_eval_results}
It is obvious that the quality of the estimates for the Poincar\'e
constants depends on the relation of the  
volumes $|{\Omega}_{(n)}|$ and $|{\tilde{\Omega}^{{\!{~}}^{\scriptstyle{*}}{\sc  }}
}_{\hspace*{.1cm}\!\tilde{\underline{x}},R,(n)}|$  resp. $|\hat{\Omega}_{{(n)}}|$ and $|{\Omega}^{*}_{(n)}|$. 
We will confirm that the Poincar\'e constants 
$c_{p}({{\Omega}}_{(n)})$ and $c_{p}({{\tilde{\Omega}^{{\!{~}}^{\scriptstyle{*}}}}
}_{\hspace*{.1cm}\!\tilde{\underline{x}},R,(n)})$ are "almost identical" for\, $|{\Omega}_{(n)}|$ $\approxeq$ 
$|{{\tilde{\Omega}^{{\!{~}}^{\scriptstyle{*}}}}
}_{\hspace*{.1cm}\!\tilde{\underline{x}},R,(n)}|$ resp. that the Poincar\'e constants $c_{p}(\hat{\Omega}_{{(n)}})$ and $c_{p}({{\Omega}^{*}}_{(n)})$ are "almost identical" for \, 
$|\hat{\Omega}_{{(n)}}|$ $\approxeq$  $|{\Omega}^{*}_{(n)}|$.
From another point of view we note, that our estimates are closely related and an improvement of the estimate of Poincar\'e constants in
\cite[formula (7.44)]{GilTru} written in our notations as:
\begin{align} \label{GiTru}
c_{p}({{\Omega}}_{(n)})\,\leq \,\left( \frac{|{\Omega}_{(n)}|}{{|\Omega^*_{(n)}|}}
 \right)^{\frac{1}{n}} \,.
\end{align}
\begin{note} \label{RefGiTru} We illustrate our improvement by an example. 
Let ${{\Omega}}_{(n)}:={{\tilde{\Omega}^{{\!{~}}^{\scriptstyle{*}}}}
}_{\hspace*{.1cm}\!\tilde{\underline{x}},R,(n)}$ and let 
the values of $\kappa_{1,L,(n)}(o)$ for ${\Omega}_{(n)}^{*}$ be the
first roots of the corresponding Besselfunctions for $n=2,3,4,\dots$. 
Then \eqref{GiTru} leads to $c_p({{\Omega}}_{(n)})\,\leq \,( \frac{|{\Omega}_{(n)}|}{{|\Omega^*_{(n)}|}}
 )^{\frac{1}{n}}\,=\,R$. 
But taking into account our Corollary
\ref{Poi_Eigv_Omega} for every fixed
$n\,\in\,{\bb N}:\,n>1$ by \eqref{R_estim_PC} we have \quad
$c_{p}({{\Omega}}_{(n)})\,\leq \, R\cdot{\kappa^{-1}_{1,L,(n)}(o)}$\quad
with ${\kappa^{-1}_{1,L,(n)}(o)}\,<\,1$.
\end{note}
\noindent Our method is also suitable to place the Poincar\'e constant
within an interval. In what follows we only consider 
the Poincar\'e constants $c_{p}(.)$ with respect to the space 
${\bb W}_{2}^{1}\hspace{-.62cm}{~}^{{~}^{{~}^{o}}}\hspace{.2cm}(.)$ 
resp. ${\bb
  W}_{2}^{1}\hspace{-.62cm}{~}^{{~}^{{~}^{o}}}\hspace{.2cm}({\breve{\Omega}})$. The
statements for 
the Poincar\'e constants $c_{p,S}(.)$ are adaptions of the results in Corollary
\ref{Poi_Eigv_Omega} to the sets
\begin{align} \label{G_Encl}
\tilde{\underline{x}}\,+\,{\Omega}^{*}_{R,(n)}\,=\,
{{\tilde{\Omega}^{{\!{~}}^{\scriptstyle{*}}}}
}_{\hspace*{.1cm}\!\tilde{\underline{x}},R,(n)}\,\supset \,
{{\Omega}}_{(n)}\,\supset \,\check{\underline{x}}\,+\,{\Omega}^{*}_{R_i,(n)}\,=\,
{{\check{\Omega}^{{\!{~}}^{\scriptstyle{*}}}}
}_{\hspace*{.1cm}\!\check{\underline{x}},R_i,(n)}\,\,\,,
\end{align}
where we denote by ${{\check{\Omega}^{{\!{~}}^{\scriptstyle{*}}}}
}_{\hspace*{.1cm}\!\check{\underline{x}},R_i,(n)}$ an open ball ${{\check{\Omega}^{{\!{~}}^{\scriptstyle{*}}}}
}_{\hspace*{.1cm}\!\check{\underline{x}},R_i,(n)}
:=\,
\check{\underline{x}}\,+\,{\Omega}^{*}_{R_i,(n)}\,:=\{{\underline{x}} \in {\bb R}^{n}:\,
\|{\underline{x}}-\check{\underline{x}}\|\,<\,R_i\,\}$ with the maximal radius $R_i$ such that
${{\Omega}}_{(n)}\,\supset \,\check{\underline{x}}\,+\,{\Omega}^{*}_{R_i,(n)}\,=\,
{{\check{\Omega}^{{\!{~}}^{\scriptstyle{*}}}}
}_{\hspace*{.1cm}\!\check{\underline{x}},R_i,(n)}$. 
We note, that the choice for 
${{\check{\Omega}^{{\!{~}}^{\scriptstyle{*}}}}
}_{\hspace*{.1cm}\!\hat{\underline{x}},R_i,(n)}$ does not need to be unique. The
existence of such a ball is ensured by the 
convexity of ${{\Omega}}_{(n)}$. For any fixed arbitrary ball with
maximal radius $R_i$ we see
\begin{figure}[ht]
{~}\hspace*{6.05cm}
\begin{tikzpicture}[scale=1.2,tdplot_main_coords] 
\coordinate (P) at ({0},{2.2225/sqrt(2)},{2.2225/sqrt(2)
});
\coordinate (P1) at ({0},{0.47},{-1.43}
);
\shade[ball color = yellow,
    opacity = 0.35
] (0,0,0) circle (1.75cm);
\shade[ball color = blue,
    opacity = 0.4
] (0,0,0) ellipse (1.75cm and 1.4cm);
\shade[ball color = yellow,
    opacity = 0.4
] (0,0,0) circle (1.4cm);
\draw[thick,black, -stealth] (0,0,0) -- (P);
\draw[thick,red, -stealth] (0,0,0) -- (P1);
\draw(-.3,1.,1.)  node[anchor=east,black]{$R=\text{diam}\,\Omega_{(n)}$};
\draw(.15,0.9,-.1)  node[anchor=east,black]{\red {$R_i$}};
\draw(0.25,-0.75,-0.35) node[anchor=east]{\Large{$\Omega_{(n)}$}};
%
\end{tikzpicture}\\
\caption{$2$d Sketch of ${{\Omega}}_{(n)}$ included in  
$\tilde{\underline{x}}\,+\,{\Omega}^{*}_{R,(n)}$ (ball with radius  $R=\text{diam}\,\Omega_{(n)}$) and 
$\check{\underline{x}}\,+\,{\Omega}^{*}_{R_i,(n)}$  included in ${{\Omega}}_{(n)}$}
\label{fig:balldomballgeometry}
\end{figure}
\begin{corollary}
\label{Poi_Encl_Omega} Let ${{\Omega}}_{(n)}$ fulfill relation
\eqref{G_Encl}, i.e. 
${{\tilde{\Omega}^{{\!{~}}^{\scriptstyle{*}}}}
}_{\hspace*{.1cm}\!\tilde{\underline{x}},R,(n)}\,\supset \,
{{\Omega}}_{(n)}\,\supset \,
{{\check{\Omega}^{{\!{~}}^{\scriptstyle{*}}}}}_{\hspace*{.1cm}\!\check{\underline{x}},R_i,(n)}$.
Then the Poincar\'e constants $c_{p}({{\Omega}}_{(n)})$ with respect to 
${\bb W}_{2}^{1}\hspace{-.62cm}{~}^{{~}^{{~}^{o}}}\hspace{.2cm}({{\Omega}}_{(n)})$ 
(cf. \eqref{Poinc}) are bounded from above and below by
\begin{align} \label{R_Ri_estim_PC}
R_i\cdot{\kappa^{-1}_{1,L,(n)}(o)}\,\leq \, 
 c_{p}({{\Omega}}_{(n)})\,\leq \, R\cdot{\kappa^{-1}_{1,L,(n)}(o)}.
\end{align}
There we have used the smallest positive roots $\kappa_{1,L,(n)}(o)$ 
of the corresponding Besselfunctions
$J_{\frac{n}{2}-1}(.)$. Explicitly,
\begin{align}\label{mon_Lapl_Kug_Diakug}
{R_i }^{-2}\cdot{\kappa^{2}_{1,L,(n)}(o)}
\,=\,
\lambda_{1,L,(n)}(
{{\check{\Omega}^{{\!{~}}^{\scriptstyle{*}}}}
}_{\hspace*{.1cm}\!\check{\underline{x}},R_i,(n)})\,\geq \,
\lambda_{1,L,(n)}({{\Omega}}_{(n)})\,\geq \,\lambda_{1,L,(n)}({{\tilde{\Omega}^{{\!{~}}^{\scriptstyle{*}}}}
}_{\hspace*{.1cm}\!\tilde{\underline{x}},R,(n)})\,=\,R^{-2}\cdot{\kappa^{2}_{1,L,(n)}(o)}.
\end{align}
\end{corollary}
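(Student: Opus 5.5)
The plan is to obtain the two-sided bound \eqref{mon_Lapl_Kug_Diakug} for the first eigenvalue, by domain monotonicity applied twice, and then translate it into \eqref{R_Ri_estim_PC} via \eqref{simple-tool}. The right-hand inequality and the right-hand equality of \eqref{mon_Lapl_Kug_Diakug} are already contained in Corollary \ref{Poi_Eigv_Omega}, more precisely in \eqref{R-Absch_EW}. There, Proposition \ref{EW_Laplace_2} (the inclusion ${\Omega}_{(n)}\subset\tilde{\Omega}^*_{\,\tilde{\underline{x}},R,(n)}$) is combined with the scaling relation \eqref{Transf_1st_Eigenv}. So only the left part needs new work.

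For the left part I will repeat the argument of Proposition \ref{EW_Laplace_2}, with the pair $(\Omega_{(n)},\tilde{\Omega}^*_{\,\tilde{\underline{x}},R,(n)})$ replaced by the pair $(\check{\Omega}^*_{\,\check{\underline{x}},R_i,(n)},\Omega_{(n)})$. I introduce the subspace of ${\bb W}_{2}^{1}\hspace{-.62cm}{~}^{{~}^{{~}^{o}}}\hspace{.2cm}({\Omega}_{(n)})$ consisting of functions that vanish a.e. on ${\Omega}_{(n)}\setminus\check{\Omega}^*_{\,\check{\underline{x}},R_i,(n)}$ and whose restriction lies in ${\bb W}_{2}^{1}\hspace{-.62cm}{~}^{{~}^{{~}^{o}}}\hspace{.2cm}(\check{\Omega}^*_{\,\check{\underline{x}},R_i,(n)})$. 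This subspace is isometric to ${\bb W}_{2}^{1}\hspace{-.62cm}{~}^{{~}^{{~}^{o}}}\hspace{.2cm}(\check{\Omega}^*_{\,\check{\underline{x}},R_i,(n)})$, because extension by zero preserves both the ${\bb L}_2$-norm and the Dirichlet norm. Minimizing the Rayleigh quotient \eqref{n_Lapl_DiaKug} over this smaller set can only increase the minimum, which gives $\lambda_{1,L,(n)}(\check{\Omega}^*_{\,\check{\underline{x}},R_i,(n)})\geq\lambda_{1,L,(n)}({\Omega}_{(n)})$.

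Next I apply the transformation $\underline{x}\mapsto R_i^{-1}(\underline{x}-\check{\underline{x}})$, the analogue of \eqref{transf}, to the inner ball. Translation leaves the Dirichlet problem unchanged. Under the dilation, $\|u\|_2^2$ scales by $R_i^{n}$ and $\|u\|_D^2$ by $R_i^{n-2}$, so the Rayleigh quotient scales by $R_i^{-2}$. As in Remark \ref{transformationR}, this yields $\lambda_{1,L,(n)}(\check{\Omega}^*_{\,\check{\underline{x}},R_i,(n)})=R_i^{-2}\lambda_{1,L,(n)}(o)=R_i^{-2}\kappa^2_{1,L,(n)}(o)$ by Note \ref{Trieb_Laplace_1}. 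This completes \eqref{mon_Lapl_Kug_Diakug}.

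Finally, taking reciprocals of square roots in \eqref{mon_Lapl_Kug_Diakug} reverses the inequalities and gives \eqref{R_Ri_estim_PC}, using $c_p=\lambda_{1,L}^{-1/2}$ from \eqref{simple-tool}. The only step needing care, rather than a real obstacle, is the existence of an inscribed ball of radius $R_i>0$. This follows from the convexity and openness of ${\Omega}_{(n)}$: the function $\underline{x}\mapsto\mathrm{dist}(\underline{x},\partial{\Omega}_{(n)})$ is continuous on the compact closure and attains a positive maximum. Non-uniqueness of the center does not matter, since only $R_i$ enters the bound.
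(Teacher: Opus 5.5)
Your proposal is correct and takes the same route as the paper. The paper's proof reuses the zero-extension/Rayleigh-quotient monotonicity argument of Propositions \ref{EW_Laplace_1} and \ref{EW_Laplace_2} for the inner inclusion in \eqref{G_Encl}, then applies the $R_i^{-2}$ scaling of the ball eigenvalue and $c_p=\lambda_{1,L}^{-1/2}$ from \eqref{simple-tool}. As a small aside, the inscribed ball is already part of the hypothesis, and your distance-function argument for its existence uses only boundedness and openness, not convexity.
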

\begin{proof}
We use the same ideas as in the proofs of 
Propositions \ref{EW_Laplace_1} and \ref{EW_Laplace_2} and exchange the roles in the inclusions
for the additionally inequalities.
\end{proof}
\begin{figure}[ht]
\hspace*{6.92cm}
\begin{tikzpicture}[x  = {(0.5cm,0.5cm)},
                 y  = {(0.95cm,-0.25cm)},
                  z  = {(0cm,0.9cm)}]
\begin{scope}[canvas is yz plane at x=-1]
  \shade[left color=dark-blue!50,right color=blue!20] (-1,-1) rectangle (1,1);
\end{scope}
\begin{scope}[canvas is xz plane at y=1]
  \shade[right color=dark-blue!70,left color=blue!20] (-1,-1) rectangle (1,1);
\end{scope}
\begin{scope}[canvas is yx plane at z=1]
  \shade[top color=dark-blue!80,bottom color=blue!20] (-1,-1) rectangle (1,1);
\end{scope}
\end{tikzpicture}\\[-3.5cm]
\hspace*{6.54cm}
\begin{tikzpicture}[scale=1.852,tdplot_main_coords] 
\coordinate (P) at ({0},{1.905/sqrt(2)},{1.905/sqrt(2)
});
\shade[ball color = yellow,
    opacity = 0.25
] (0,-0.5,0) circle (1.005cm);
\shade[ball color = Orchid,
    opacity = 0.4
] (0,-0.5,0) circle (0.7571cm);
\end{tikzpicture}\\[-.5cm]
\caption{$2$d Sketch of a cube  ${\hat{\Omega}}_{(n)}$ as a $C^{0,1}$ domain example with
${\hat{\Omega}}_{(n)}\,\subset {\Omega}_{(n)}^{*} $ and the ball 
${\Omega}^{*}_{R_i,(n)}$  included in ${\hat{\Omega}}_{(n)}$ }
\label{fig:cubegeometry}
\end{figure}
{~}\\[.05cm]
\noindent The last step in our investigations is the evaluation of our results on a cube in ${\bb R}^{n}$. We choose
${{\Omega}}_{(n)}\,:=\,
{\hat{\Omega}}_{(n)}\,\subset\,{\Omega}^{*}_{(n)}$ for the sake of simplicity.
There we note that a cube ${{\Omega}}_{(n)}\,=\,{\hat{\Omega}}_{(n)}$
is a $C^{0,1}$-domain with known first eigenvalues. For every fixed
$n\,\in\,{\bb N}:\,n>1$, the edge length $a_{(n)}$ of a cube with
$R\,=\,1\,=\,\text{diam}\,{{\Omega}}_{(n)}\,=\,\text{diam}\,{\hat{\Omega}}_{(n)}$
is given by Pythagoras's theorem as $a_{(n)}\,=\,\sqrt{\frac{4}{n}}$.  
The volume of ${\Omega}^{*}_{(n)}$ was given in the {General notation A} as 
$|{\Omega}^{*}_{(n)}|\,=\,
\frac{{2\pi}^{\frac{n}{2}}}{\Gamma(\frac{n}{2})\cdot n} \,\,\,\text{and the inside ball\,}\,{\Omega}^{*}_{R_i,(n)}\,=\,
{{\check{\Omega}^{{\!{~}}^{\scriptstyle{*}}}}
}_{\hspace*{.1cm}\!{\underline{0}},R_i,(n)} \,\,\text{has the volume\,}|{\Omega}^{*}_{R_i,(n)}|\,=\,(\frac{a_{(n)}}{2})^n\cdot|{\Omega}^{*}_{(n)}|.$
The radius $R_i\,=\,\frac{a_{(n)}}{2}$. 
The volume of the cube ${\hat{\Omega}}_{(n)}$ is 
$|{\hat{\Omega}}_{(n)}|\,=\,a_{(n)}^n$ and the first eigenvalues are 
$\lambda_{1,L,(n)}({\hat{\Omega}}_{(n)})\,=\,n\cdot (\frac{\pi}{a_{(n)}})^2$.\\[2mm]
The volumes $|{\Omega}^{*}_{(n)}|$, $|{\hat{\Omega}}_{(n)}|$ and 
$|{\Omega}^{*}_{R_i,(n)}|$, respectively, and the Poincar\'e constants $c_{p}(.)$ for 
${\Omega}^{*}_{(n)}$, ${\hat{\Omega}}_{(n)}$ and 
${\Omega}^{*}_{R_i,(n)}$ (depending on $n$) are easy to calculate as\\[.2cm]
\begin{tabular}{|c|r|r|r|r|r|r|}
\hline 
$n$  & $|{\Omega}^{*}_{(n)}|$ &  $|{\hat{\Omega}}_{(n)}|$ & $|{\Omega}^{*}_{R_i,(n)}|$ &
$c_{p}({\Omega}^{*}_{(n)})$
& $c_{p}({\hat{\Omega}}_{(n)})$ & $c_{p}({\Omega}^{*}_{R_i,(n)})$ \\
\hline
2 & 3.141592654 &  2.000000000  &  1.570796326 & 0.4158305773 & 0.3183098861 &  0.2940366210
\\
\hline
3 & 4.188790204 &  1.539600716  &  0.8061330502 & 0.3183098861 & 0.2122065907 & 0.1837762984 
\\
\hline
4 & 4.934802202 &  1.000000000  &  0.3084251376 & 0.2609803591 & 0.1591549431 & 0.1304901796
\\
\hline
5 & 5.263789015 & 0.5724334022  &  0.09416152044 & 0.2225481584 & 0.1273239544 & 0.09952656210
\\
\hline
6 & 5.167712783 &  0.2962962962  &  0.02392459620 & 0.1947183693 & 0.1061032954 & 0.07949344140
\\
\hline
7 & 4.724765972 &  0.1410479666  &  0.005206395568 & 0.1735069107 & 0.09094568173 & 0.06557944805
\\
\hline
8 & 4.058712129 &  0.06250000001  & 0.0009908965158 & 0.1567358347 & 0.07957747155 & 0.05541448575
\\
\hline
9 & 3.298508904 &  0.02601229485 & 0.0001675816136 & 0.1431038539 & 0.07073553025 & 0.04770128463
\\
\hline
\end{tabular}
{~}\\[.25cm]
The statements for the (Stokes-)Poincar\'e constants $c_{p,S}(.)$ for a cube in ${\bb R}^{n}$ are very difficult.
The simple reason is that a cube (or a square) ${{\Omega}}_{(n)}\,=\,{\hat{\Omega}}_{(n)}$ in 
${\bb R}^{n}$ is a $C^{0,1}$-domain. The condition for the vanishing
divergence for the eigenfunctions impede a simple separation of the
variables there. 
We consider the square ${{\Omega}}_{(2)}\,=\,{\hat{\Omega}}_{(2)}$ in 
${\bb R}^{2}$ as an example. With the help of appropriate numerical
methods the approximation for the first Stokes  
eigenvalue ${\lambda_{1,S,(2)}}({\hat{\Omega}}_{(2)}) \,\approxeq \,10.600725212$
(cf. \cite{RumSqare}). So we get approximately  $c_{p,S}({\hat{\Omega}}_{(2)})\,\approx\,0.3183098861$.
But these results are restricted to small $n\,\in\,{\bb N}$.

\section*{Appendix}\label{App_Bes}
Let the Cartesian unit vectors of ${\bb
  R}^{n}\;(n\geq 3)$
be given by ${\underline{\mathfrak{e}}}_{j}\,:=\,
(\delta_{j,1},\delta_{j,2}, \dots, \delta_{j,n})^{T}$ for all $j=1,2,\dots,n$, with
Kronecker's delta $\delta_{j,k}$. 
The polar coordinates are stated as $r$,  $\vartheta_{1}$, $\dots$ , $\vartheta_{n-2}$ and $\varphi$ 
(cf. the following Definition \ref{polarcoord}) with the corresponding unit vectors
${\underline{\mathfrak{e}}}_{{\;\!}r}$, ${\underline{\mathfrak{e}}}_{{\;\!}\vartheta_{1}}$, $\dots$, ${\underline{\mathfrak{e}}}_{{\;\!}\vartheta_{n-2}}$  and
${\underline{\mathfrak{e}}}_{{\;\!}\varphi}$.
\begin{definition} \label{polarcoord}
The representation of any point ${\underline{x}} \in {\bb R}^{n}$ in the system
of polar coordinates is given via
\begin{align}  \label{Pol1Sh}
\begin{array}{rcl}
x_1 &\,=\,&r\cdot \sin{\vartheta_{1}}\cdots\sin{\vartheta_{n-2}}\cos{\varphi}\\
x_2 &\,=\,&r\cdot \sin{\vartheta_{1}}\cdots\sin{\vartheta_{n-2}}\sin{\varphi}\\
x_3 &\,=\,&r\cdot \sin{\vartheta_{1}}\cdots\cos{\vartheta_{n-2}}\\
{~} & \vdots  & {~} \\
x_{n-1} &\,=\,&r\cdot \sin{\vartheta_{1}}\cos{\vartheta_{2}}\\
x_{n} &\,=\,&r\cdot \cos{\vartheta_{1}}
\end{array}\\
\mbox{with}\,\,
r\,:=\,\|{\underline{x}}\| \in [0,\infty) ,\,\vartheta_{1},\dots,\vartheta_{n-2}\,\in\,[0,\pi]
\,\,\mbox{and}\,\,\varphi\,\in\,[0,2\pi] \,\,.\nonumber
\end{align}
\end{definition}
\begin{notation}[Surface harmonics of degrees $\ell = 0$ and $\ell = 1$]\label{SurfHarm0+1}
The function $f(\vartheta_{n-2},\dots\,\vartheta_{1},\varphi)\,=\,const.\,\not=\,0$  is a non-vanishing harmonic 
polynomial of degree $\ell = 0$ in $r$. For any $ n\,\in {\bb
  N}:n\,>\,1$ the sherical  
surface harmonic function
\begin{align}  \label{SphSurf0}
S^{\{0\}}\,:=\,1\,.
\end{align}
In the representation \eqref{Pol1Sh} the functions $\{x_{k}\}_{k=1}^n$
are harmonic polynomials
of degree $\ell = 1$ in $r$. We call the functions
 \begin{align}  \label{SphSurf1}
\begin{array}{rcl}
S^{\{1\}}_1 &\,=\,& \sin{\vartheta_{1}}\cdots\sin{\vartheta_{n-2}}\cos{\varphi}\\
S^{\{1\}}_2 &\,=\,& \sin{\vartheta_{1}}\cdots\sin{\vartheta_{n-2}}\sin{\varphi}\\
S^{\{1\}}_3 &\,=\,& \sin{\vartheta_{1}}\cdots\cos{\vartheta_{n-2}}\\
{~} & \vdots  & {~} \\
S^{\{1\}}_{n-1} &\,=\,& \sin{\vartheta_{1}}\cos{\vartheta_{2}}\\
S^{\{1\}}_{n} &\,=\,& \cos{\vartheta_{1}}\,,
\end{array}\\
\mbox{where}\,\,
\,\vartheta_{1},\dots,\vartheta_{n-2}\,\in\,[0,\pi]
\,\,\mbox{and}\,\,\varphi\,\in\,[0,2\pi] \,\,,\nonumber
\end{align}
sherical surface harmonics of degree $\ell = 1$. We write $S\,\in\,{\mbox{span}}\{S^{\{1\}}_k\}_{k=1}^n$ 
for a spherical surface harmonic function of degree $\ell = 1$
(e.g. the Definition in Subsection  6.3.1 in \cite{Triebel}) as well.
\end{notation}
\begin{remark}\label{JacobiMat}
The first step in the calculation of
the transformation between Cartesian and spherical polar coordinates
(cf. Definition \ref{polarcoord})
is to calculate the Jacobian matrix:
\begin{align*}
{\underline{\underline{J}}} =
\begin{bmatrix} 
\frac{\partial x_1}{\partial r}&\frac{\partial x_1}{\partial \vartheta_{1}}&
\dots &\frac{\partial x_1}{\partial \vartheta_{n-2}}
&\frac{\partial x_1}{\partial{\varphi}}\\
\vdots&\vdots & {~}& \vdots&\vdots\\
\frac{\partial x_n}{\partial r}&\frac{\partial x_n}{\partial \vartheta_{1}}&
\dots  &\frac{\partial x_n}{\partial \vartheta_{n-2}}&\frac{\partial x_n}{\partial{\varphi}}
\end{bmatrix}
  \end{align*}
 and in a second step the corresponding metric tensor 
 ${\underline{\underline{g}}}$:
\begin{align*}  
{\underline{\underline{g}}}\,:=\,{\underline{\underline{J}}} \cdot {\underline{\underline{J}}}^T\,=\,
\text{diag}\{g_{j,j}\}_{j=1}^{n}\quad\,\text{with}\,\, g_{1,1}=1,\,g_{2,2}=r^2,\,\dots\,,
g_{n,n}=r^2{\sin}^2{\vartheta_{1}}  \cdots {\sin}^2{\vartheta_{n-2}}
\,.
\end{align*}
The reciprocal of the square roots of the $\{g_{j,j}\}_{j=1}^{n}$ used
as multipliers column by column 
applied on ${\underline{\underline{J}}}$ provide  the transformation matrices.
\end{remark}
\begin{remark}
\label{R1} The transformation between Cartesian and spherical polar coordinates
(cf. Notation \ref{N3}) as the transformation of one coordinate system to the other 
can be written as ${\underline{u}}_{\mathfrak{c}}\,=\,
{\underline{\underline{T}}}_{{\mathfrak{c}},{\mathfrak{s}}}{\underline{u}}_{\mathfrak{s}}$
\,or\,\,\,${\underline{u}}_{\mathfrak{s}}\,=\,
{\underline{\underline{T}}}^{-1}_{{\mathfrak{c}},{\mathfrak{s}}}
{\underline{u}}_{\mathfrak{c}}\,=\,
{\underline{\underline{T}}}_{{\mathfrak{s}},{\mathfrak{c}}}{\underline{u}}_{\mathfrak{c}}$,
respectively,
where we  use the concept of columns of coordinates
and the transformation matrices ${\underline{\underline{T}}}_{{\mathfrak{s}},{\mathfrak{c}}}\,:=\,
{\underline{\underline{T}}}_{{\mathfrak{c}},{\mathfrak{s}}}^{-1}\,=\,
{\underline{\underline{T}}}_{{\mathfrak{c}},{\mathfrak{s}}}^{T}\,$.\\
For general $n$:
\begin{equation}\label{R1_T=n} 
		{\underline{\underline{T}}}_{{\mathfrak{c}},{\mathfrak{s}}}\,
		:=\,\left[
\begin{array}{llccc}
\sin{\vartheta_{1}}\sin{\vartheta_{2}}\cdots \sin{\vartheta_{n-2}}\cos{\varphi} & 
\cos{\vartheta_{1}}\sin{\vartheta_{2}}\cdots \sin{\vartheta_{n-2}}\cos{\varphi}& \dots &
\cos{\vartheta_{n-2}}\cos{\varphi}&
-\sin{\varphi} \\
\sin{\vartheta_{1}}\sin{\vartheta_{2}}\cdots \sin{\vartheta_{n-2}}\sin{\varphi} & 
\cos{\vartheta_{1}}\sin{\vartheta_{2}}\cdots \sin{\vartheta_{n-2}}\sin{\varphi} & \dots &\cos{\vartheta_{n-2}}\sin{\varphi}&
 {~} \cos{\varphi}\\
\sin{\vartheta_{1}}\sin{\vartheta_{2}}\cdots \cos{\vartheta_{n-2}}& 
\cos{\vartheta_{1}}\sin{\vartheta_{2}}\cdots \cos{\vartheta_{n-2}} & \dots & - \sin{\vartheta_{n-2}}&
 {~} 0\\
\vdots & \vdots & \ddots &\vdots & \vdots\\
\sin{\vartheta_{1}}\cos{\vartheta_{2}} & \cos{\vartheta_{1}}\cos{\vartheta_{2}} &\dots &
 {~} 0 & {~} 0\\
\cos{\vartheta_{1}} & -\sin{\vartheta_{1}}& \dots &    {~} 0 & {~} 0
		\end{array}
		\right] 
                \,.
\end{equation}
\end{remark}
\begin{remark}\label{Lapldimn}
The Laplacian in spherical coordinates
$\Delta_{sph}(.):=\Delta_{r,\vartheta_{1},\dots\,\vartheta_{n-2},\varphi}(.)$ is
($n\geq3$; see, e.g.,  6.3.4 in \cite{Triebel})
\begin{align*} \hspace*{-.3cm}
 \Delta_{sph}(.)  = \frac{1}{r^{n-1}}
\frac{\partial{ }}{\partial r}({r^{n-1}}\frac{\partial { (.)}}{\partial r})
+\frac{1}{r^{2}}\left(
\frac{1}{\sin^{n-2}{\vartheta_{1}}}\frac{\partial{ }}{\partial \vartheta_{1}}
(\sin^{n-2}\vartheta_{1}\frac{\partial{ }(.)}{\partial \vartheta_{1}})+
\frac{1}{\sin^{2}\vartheta_{1}\sin^{n-3}{\vartheta_{2}}}\frac{\partial{ }}{\partial \vartheta_{2}}
(\sin^{n-3}\vartheta_{2}\frac{\partial{ }(.)}{\partial \vartheta_{2}})\,+\right.
\\
{~} 
\left.\,\dots\,+\,
\frac{1}{\sin^{2}\vartheta_{1}\sin^{2}{\vartheta_{2}}\cdots
\sin^{2}{\vartheta_{n-3}}
\sin^{~}{\vartheta_{n-2}}
}\frac{\partial{ }}{\partial \vartheta_{n-2}}
(\sin \vartheta_{n-2}\frac{\partial{ }(.)}{\partial \vartheta_{n-2}})\,+
\,
\frac{1}{\sin^{2}\vartheta_{1}\sin^{2}{\vartheta_{2}}\cdots
\sin^{2}{\vartheta_{n-2}}}
  \frac{\partial^{2}{ (.)}}{\partial \varphi^{2}}\right)\,\,,
\end{align*}
resp. $\displaystyle
\Delta_{r,\vartheta_{1},\dots\,\vartheta_{n-2},\varphi}(.)  = 
{~} 
\,\frac{1}{r^{n-1}}
\frac{\partial{ }}{\partial r}({r^{n-1}}\frac{\partial { (.)}}{\partial r})
\,-\,\frac{\displaystyle{1}}{\displaystyle{r^{2}}}{ \mbox{B}}{\;\!}(.)\,,\hspace*{.2cm}{~}
$
where B$(.) $ denotes the Beltrami differential operator.
\end{remark}
\noindent We define the Laplace-Beltrami operator by means of
Beltrami's differential operator
in the following
\begin{definition}\label{DBeltr} { For all $Y\,\in\,D({\boldsymbol
  B^{\circledast}})=C^{\infty}({\omega_{(n)}}) \subset {\bb
  L}_{2}({\omega_{(n)}})\,$  the Laplace-Beltrami operator is defined
as}
\begin{align*}
{\boldsymbol B^{\circledast}}\,{Y} := \,{ \mbox{B}}{\;\!}(Y)\,\,=\,-
\left(
\frac{1}{\sin^{n-2}{\vartheta_{1}}}\frac{\partial{ }}{\partial \vartheta_{1}}
(\sin^{n-2}\vartheta_{1}\frac{\partial{ } Y}{\partial \vartheta_{1}})+
\frac{1}{\sin^{2}\vartheta_{1}\sin^{n-3}{\vartheta_{2}}}\frac{\partial{ }}{\partial \vartheta_{2}}
(\sin^{n-3}\vartheta_{2}\frac{\partial{ } Y }{\partial \vartheta_{2}})\,+\right.\hspace*{3.1cm}  
\\
{~} 
\left.\,\dots\,+\,
\frac{1}{\sin^{2}\vartheta_{1}\sin^{2}{\vartheta_{2}}\cdots
\sin^{2}{\vartheta_{n-3}}
\sin^{~}{\vartheta_{n-2}}
}\frac{\partial{ }}{\partial \vartheta_{n-2}}
(\sin \vartheta_{n-2}\frac{\partial{ } Y}{\partial \vartheta_{n-2}})\,+
\,
\frac{1}{\sin^{2}\vartheta_{1}\sin^{2}{\vartheta_{2}}\cdots
\sin^{2}{\vartheta_{n-2}}}
\frac{\partial^{2}{\, Y }}{\partial \varphi^{2}}\right)\,\,.
\end{align*}
We denote the Friedrichs' extension of ${\boldsymbol B^{\circledast}}$
by ${\boldsymbol 
  B}:={\overline{\boldsymbol B^{\circledast}}}$, where ${\boldsymbol
  B}$ is applied on 
$D({\boldsymbol B})\,:=\,{\bb W}_{2}^{2}({\omega_{(n)}})\subset {\bb L}_{2}({\omega_{(n)}})$.
\end{definition}
\begin{remark} The detailed construction of the Laplace-Beltrami operator ${\boldsymbol
  B}$ is given in \cite[Subsection 6.3.5]{Triebel} at great length. Especially, the step from an $n$-dimensional shell to the boundary ${\omega_{(n)}}$ is illustrated there.
\end{remark}
\noindent We explicitly quote the following result which is important for the eigenfunction of the Laplacian 
as well as for the Stokes operator.
\begin{theorem} \label{thmsbeltr}
The Laplace-Beltrami operator ${\boldsymbol B}$ is nonnegative and self-adjoint.
${\boldsymbol B}$ is an operator with pure point spectrum. Its eigenvalues are $\ell (\ell+n-2)$,  
$\ell\,=\,0,1,2,\dots $.
The surface harmonics S(.) of the degree $\ell$ form a set of all eigenfunctions of ${\boldsymbol B}$
to the eigenvalue $\ell (\ell+n-2)$.
\end{theorem}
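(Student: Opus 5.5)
The plan is to first set up $\boldsymbol B$ as a Friedrichs extension via its quadratic form. Then I identify the surface harmonics as eigenfunctions using the spherical Laplacian of Remark \ref{Lapldimn}. Finally, completeness of the surface harmonics in ${\bb L}_{2}(\omega_{(n)})$ rules out any further spectrum.

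\emph{Step 1 (nonnegativity, self-adjointness, point spectrum).} For $Y,Z\in C^{\infty}(\omega_{(n)})$ I integrate by parts in each angular variable against the surface measure $\sin^{n-2}\vartheta_1\sin^{n-3}\vartheta_2\cdots\sin\vartheta_{n-2}\,d\vartheta_1\cdots d\varphi$. The weights in Definition \ref{DBeltr} are exactly those making each term a divergence. The boundary terms vanish because the weights vanish at $\vartheta_k\in\{0,\pi\}$ and $\varphi$ is periodic; equivalently, $\omega_{(n)}$ is a closed manifold. This gives $({\boldsymbol B^{\circledast}}Y,Z)_{{\bb L}_2}=\int_{\omega_{(n)}}\nabla_{\omega}Y\cdot\nabla_{\omega}Z\,d\sigma$. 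Hence ${\boldsymbol B^{\circledast}}$ is symmetric and nonnegative, and its Friedrichs extension ${\boldsymbol B}$ is self-adjoint and nonnegative. The form domain is ${\bb W}^1_2(\omega_{(n)})$, which embeds compactly into ${\bb L}_2(\omega_{(n)})$ by Rellich's theorem on the compact manifold $\omega_{(n)}$. Therefore $({\boldsymbol B}+I)^{-1}$ is compact and self-adjoint, and the spectrum of ${\boldsymbol B}$ is pure point, as in Theorem \ref{thmstok} and Corollary \ref{STOeiFU}. Elliptic regularity on $\omega_{(n)}$ identifies the domain with ${\bb W}^2_2(\omega_{(n)})$, as in \cite[6.3.5]{Triebel}.

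\emph{Step 2 (surface harmonics are eigenfunctions).} Let $P$ be a harmonic homogeneous polynomial of degree $\ell$ and write $P=r^{\ell}S$ with $S=P_{|\omega_{(n)}}$; for $\ell=0,1$ these are (\ref{SphSurf0}) and (\ref{SphSurf1}). Inserting $P$ into the second formula of Remark \ref{Lapldimn} gives
\begin{align*}
0=\Delta P=\frac{1}{r^{n-1}}\frac{\partial}{\partial r}\big(r^{n-1}\ell r^{\ell-1}\big)S-r^{\ell-2}\,\mbox{B}(S)=r^{\ell-2}\big(\ell(\ell+n-2)S-\mbox{B}(S)\big).
\end{align*}
Hence ${\boldsymbol B}S=\ell(\ell+n-2)S$. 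The map $\ell\mapsto\ell(\ell+n-2)$ is strictly increasing for $n\geq 2$. By self-adjointness, the spaces ${\cal H}_\ell$ of surface harmonics of distinct degrees are therefore mutually orthogonal.

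\emph{Step 3 (completeness; main obstacle).} I will show that $\bigoplus_\ell{\cal H}_\ell$ is dense in ${\bb L}_2(\omega_{(n)})$. The first tool is the classical decomposition of homogeneous polynomials of degree $m$: $P_m=h_m+\|\underline x\|^2P_{m-2}$ with $h_m$ harmonic. This follows because $\Delta$ maps degree-$m$ polynomials onto degree-$(m-2)$ polynomials, and its kernel is complementary to $\|\underline x\|^2\cdot(\text{degree }m-2)$ with respect to the Fischer inner product. Iterating, and using $\|\underline x\|=1$ on $\omega_{(n)}$, shows that restrictions of polynomials lie in $\bigoplus_\ell{\cal H}_\ell$. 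By Stone--Weierstrass these restrictions are dense in $C(\omega_{(n)})$, hence in ${\bb L}_2$.

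Now take any eigenfunction $Y$ with ${\boldsymbol B}Y=\mu Y$. If $\mu\neq\ell(\ell+n-2)$ for all $\ell$, then $Y$ is orthogonal to every ${\cal H}_\ell$, so $Y=0$. If $\mu=\ell(\ell+n-2)$, then $Y$ is orthogonal to ${\cal H}_k$ for all $k\neq\ell$, so $Y\in{\cal H}_\ell$. This yields both the list of eigenvalues and the statement that the surface harmonics of degree $\ell$ exhaust the eigenspace. I expect the polynomial decomposition in Step 3 to be the only nontrivial ingredient; if it becomes lengthy I will cite \cite[6.3.1--6.3.5]{Triebel} for it.
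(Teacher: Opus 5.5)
The paper does not prove this theorem. It quotes it as a known result, and the preceding remark refers to \cite[Subsection 6.3.5]{Triebel} for the construction of ${\boldsymbol B}$.

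Your argument is a correct, self-contained version of the classical proof:
\begin{itemize}
\item The form $\int_{\omega_{(n)}}\nabla_{\omega}Y\cdot\nabla_{\omega}Z\,d\sigma$, together with Rellich's theorem, gives a nonnegative self-adjoint Friedrichs extension with compact resolvent.
\item Inserting $r^{\ell}S$ into the polar splitting of $\Delta$ from Remark \ref{Lapldimn} gives ${\boldsymbol B}S=\ell(\ell+n-2)S$.
\item The decomposition $P_m=h_m+\|\underline{x}\|^2P_{m-2}$, combined with Stone--Weierstrass, shows that $\bigoplus_\ell{\cal H}_\ell$ is dense in ${\bb L}_2(\omega_{(n)})$.
\item Your orthogonality argument then excludes all other eigenvalues and shows that each eigenspace is exactly ${\cal H}_\ell$.
\end{itemize}

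Compared with the bare citation, your route makes clear that completeness is the real content. Once you have it, only self-adjointness from Step 1 is needed. A self-adjoint operator with a complete orthogonal system of eigenfunctions has spectrum equal to the closure of those eigenvalues, and here that closure is the discrete set $\{\ell(\ell+n-2)\}$. So the compact-resolvent part is a convenient but dispensable second route to the pure point spectrum.

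Two small points should be added:
\begin{itemize}
\item To assert that every value $\ell(\ell+n-2)$ actually is an eigenvalue, you need ${\cal H}_\ell\neq\{0\}$ for every $\ell$. This follows from the harmonic homogeneous polynomial $\mathrm{Re}\,(x_1+\mathrm{i}x_2)^\ell$.
\item Remark \ref{Lapldimn} and Definition \ref{DBeltr} are written only for $n\geq 3$. For $n=2$ you should use ${\rm B}=-\partial^2/\partial\varphi^2$ and $\Delta=r^{-1}\partial_r(r\,\partial_r)+r^{-2}\partial^2_\varphi$; your Step 2 computation then goes through unchanged and gives the eigenvalues $\ell^2$.
\end{itemize}
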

\end{document}